\numberwithin{equation}{section}
\newtheorem{prop}{Proposition}[section]
\newtheorem{lemm}[prop]{Lemma}
\newtheorem{coro}[prop]{Corollary}
\newtheorem{rem}[prop]{Remark}
\def\and{\quad{\rm and}\quad}
\def\<{\langle}
\def\>{\rangle}
\title[Curvature estimates for immersed hypersurfaces]{Curvature estimates for immersed hypersurfaces in Riemannian manifolds}
\author{Pengfei Guan
        \and
        Siyuan Lu
        }
\address{Department of Mathematics and Statistics, McGill University, 805 Sherbrooke O, Montreal, Quebec, Canada, H3A 0B9}
\email{guan@math.mcgill.ca}
\email{siyuan.lu@mail.mcgill.ca}
\thanks{Research of the first author was supported in part by an NSERC Discovery Grant. Research of the second author was supported in part by CSC fellowship and Schulich Graduate fellowship.}
\newtheorem{theorem}{Theorem}
\newtheorem{lemma}{Lemma}
\begin{document}

\begin{abstract}
We establish mean curvature estimate for immersed hypersurface with nonnegative extrinsic scalar curvature in Riemannian manifold $(N^{n+1}, \bar g)$ through regularity study of a degenerate fully nonlinear curvature equation in general Riemannian manifold. The estimate has a direct consequence for the Weyl isometric embedding problem of $(\mathbb S^2, g)$ in $3$-dimensional warped product space $(N^3, \bar g)$. We also discuss isometric embedding problem in spaces with horizon in general relativity, like the Anti-de Sitter-Schwarzschild manifolds and  the Reissner-Nordstr\"om manifolds. \end{abstract}
\subjclass{53C20,  53C21, 58J05, 35J60}
\maketitle

\section{Introduction}

The paper concerns the regularity of immersed hypersurfaces in Riemannian manifolds. The basic question is how the intrinsic and extrinsic  geometries determine the regularity of the immersion. If the ambient space is $\mathbb R^{3}$, this type of regularity question is related to the classical Weyl problem \cite{W}. The problem was solved by Nirenberg in his landmark paper \cite{N}. Prior to Nirenberg's work \cite{N}, Lewy \cite{L} solved the problem when the metric $g$ is analytic. One of the key steps is the regularity estimates. $C^2$ estimate in \cite{N} for surfaces of positive Gauss curvature has been extended to degenerate elliptic case. For compact surfaces in $\mathbb R^3$ with $K_g\ge 0$, the estimates of principal curvatures were obtained by Guan-Li \cite{GL} and Hong-Zuily \cite{HZ}, see also \cite{I}.  Li and Weinstein \cite{LW} further obtained similar type of estimates for embedded compact convex hypersurfaces $M^n$ in $\mathbb R^{n+1}$ in general dimension $n\ge 2$.
The Weyl isometric embedding problem in hyperbolic space was considered by Pogorelov \cite{P}, we also refer \cite{Pb} and references therein for discussions of isometric embeddings of $(\mathbb S^2, g)$ to general $3$-dimensional Riemannian manifolds. In hyperbolic case, an explicit mean curvature bound was recently proved by Chang and Xiao \cite{CX} for $K_g\ge -1$ under the condition that the set $\{K_g(X)=-1\}$ is finite, and sequentially by Lin and Wang \cite{WL} for general isometric embedded surfaces in $\mathbb H^3$ with $K_g\ge -1$.

\medskip

The primary focus of this paper is the estimate for second fundamental form of immersed hypersurface $(M^n,g)$ in general ambient manifold $(N^{n+1}, \bar g)$ in dimension $n\ge 2$ and application to the Weyl's isometric embedding problem of $(\mathbb{S}^2, g)$ in general $3$-dimensional Riemannian manifolds $(N^3, \bar g)$. The isometric embedding of surfaces plays a prominent role in general relativity. Recent work of Wang-Yau \cite{WY} brought some renewed interest on isometric embedding problem in general $3$-dimentional Riemannian manifolds. The Brown-York quasi-local mass is defined using solution to the classical Weyl problem \cite{BY}. For a two-surface $M$ with positive Gauss curvature bounds a space-like region $\Omega$ in a space-time $\tilde N$. Denote $H_{\Omega}$ to be the mean curvature of $M$ with respect to the outward normal of $\Omega$, and denote $H_{o}$ to be the mean curvature of the isometric embedding of $M$ into $\mathbb R^3$ ({\it solution to the Weyl problem}). The Brown-York mass is defined to be:
\begin{equation}\label{Brown-York} m_{BY}=\frac{1}{8\pi}\int_{M} (H_o-H_{\Omega}) d\sigma.\end{equation}
The positivity of $m_{BY}$ was shown by Shi-Tam \cite{ST}. In \cite{LY1, LY2}, Liu-Yau introduced Liu-Yau quasi-local mass,
\begin{equation}\label{Liu-Yau} m_{LY}=\frac{1}{8\pi}\int_{M} (H_o-|H|) d\sigma,\end{equation}
where $|H|$ is the Lorentzian norm of the mean curvature vector. The positivity of $m_{LY}$ was established in \cite{LY2}. In both of the above definitions for quasi-local masses, the Nirenberg's solution of isometric embedding of $(\mathbb S^2, g)$ to the flat $\mathbb R^3$ is used. Wang-Yau \cite{WY1} generalized Liu-Yau quasi-local mass using Pogorelov's work on isometric embedding of $(\mathbb S^2, g)$ to hyperbolic space $\mathbb H^3$. In a remarkable paper \cite{WY}, Wang-Yau further studied a new quasi-local mass for spacelike 2-surfaces in space time using isometric embedding.  It is clear that quasi-local masses and their positivity are intimately related to the isometric embedding of surfaces. For example, $3$-dimensional Anti-de Sitter-Schwarzschild manifolds can be viewed as slices in $4$-dimensional deSitter-Schwarzschild space-time, understanding of isometric embeddings of $2$-surfaces in these $3$-dimensional manifolds is an important problem.   These ambient spaces are  equipped with warped product structure, where ambient metrics are of form
\begin{equation}\label{warp2} \bar{g}=dr^2+\phi^2(r)d\sigma_{\mathbb S^2}^2, \end{equation} where $\phi(r)$ is defined for $r\ge r_0\ge 0$ and $d\sigma_{\mathbb S^2}^2$ is the standard metric on $\mathbb{S}^2$. $\phi(r)=r,  \phi(r)=\sinh r$ and $\phi(r)=\sin r$ correspond to space form $\mathbb R^3, \mathbb H^3$ and $\mathbb S^3$ respectively.
For general $n\ge 2$, warped product space $(N^{n+1}, \bar g)$ is a Riemannian manifold where $\bar g$ is defined in (\ref{warp2}) with
$d\sigma_{\mathbb S^2}^2$ replaced by $d\sigma_{\mathbb S^n}^2$.

\medskip

Let's fix some notation. Let $(M^n,g)$  be an isometrically immersed hypersurface in an ambient space $(N^{n+1}, \bar g)$ for $n\ge 2$. Denote $Ric$ and $\bar{Ric}$ the Ricci curvature tensors of $(M,g)$  and ($N,\bar{g}$) respectively, and denote $R$ and $\bar{R}$ to be the scalar curvatures of $M$ and $N$ respectively. Fixed a unit normal $\nu$ locally, denote $\kappa_i, i=1,\cdots, n$ to be the principal curvatures of $M$ with respect to $\nu$. Denote $\sigma_2$ the second elementary symmetric function,  
we call $ \sigma_2(\kappa_1,\cdots,\kappa_n)$  the {\bf extrinsic scalar curvature} of the immersed hypersurface. It is clear that it is independent the choice of unit normal $\nu$ as $\sigma_2$ is an even function. 
The Gauss equation yields,
\begin{equation}\label{scalar} \sigma_2(\kappa_1,\cdots,\kappa_n)=\frac{1}{2}(R-\bar{R})+\bar{Ric}(\nu,\nu).\end{equation} 
\medskip

The main result of this paper is the curvature estimate (\ref{estH}) of immersed hypersurafces with nonnegative extrinsic scalar curvature in Riemannian manifolds of general dimensions. If the extrinsic scalar curvature is strictly positive, this estimate and the classical $C^{2,\alpha}$ estimate of Nirenberg \cite{N1} yield desired full regularity for isometrically embedded $(\mathbb S^2, g)$ in $(N^3, \bar g)$. Together with recent result of Li-Wang \cite{GLW} on the solvability of the associated linearized system, existence of isometric embedding can be established.

\begin{theorem}\label{thm1}
Let $(N, \bar g)$ be a warped product space where $\bar g$ defined as in (\ref{warp2}). Denote $\phi^{'}(\rho)=\frac{d \phi}{d \rho}$ and $\Phi(\rho)=\int^{\rho}_0\phi (r)dr$. Suppose $X: (M^n,g)\to (N, \bar g)$ is a $C^4$ immersed compact hypersurface with nonnegative extrinsic scalar curvature and $\phi^{'}>0$ in $M$, then there exists constant $C$ depending only on $n$, $\|g\|_{C^4(M)}$, $\|\bar{g}\|_{C^4(\tilde M)}$ (where $\tilde M$ is any open set in $N$ containing $X(M)$), $\sup_{x\in M}\Phi(X(x))$ and $\inf_{x\in M}\phi^{'}(X(x))$ such that
\begin{equation}\label{estH}
\max_{x\in M,i=1,...,n}|\kappa_i(X(x))|\leq C.
\end{equation}
\end{theorem}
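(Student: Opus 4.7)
The plan is to bound the largest principal curvature $\kappa_{\max}(x):=\max_i\kappa_i(x)$ uniformly from above via a maximum-principle argument. Since $\sigma_2$ is even in $\kappa$, the equation \eqref{scalar} is invariant under reversing $\nu\mapsto-\nu$, so a one-sided bound on $\kappa_{\max}$ for either choice of orientation immediately gives the two-sided estimate \eqref{estH}. I would apply the maximum principle to the test function
\[
W(x)=\log\kappa_{\max}(x)+f(\Psi(x)),\qquad \Psi:=\Phi\circ X,
\]
where $f$ is an auxiliary function of the warped height to be chosen, handling possible multiplicities of $\kappa_{\max}$ by the standard perturbation trick.

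The warped-product structure enters through the conformal Killing field $V=\phi(r)\partial_r$ satisfying $\bar\nabla V=\phi'\,\mathrm{Id}$. This yields on $M$ the identities
\[
\nabla_i\Psi=\bar g(V,e_i),\qquad \nabla_i\nabla_j\Psi=\phi'(r)g_{ij}-u\,h_{ij},\qquad u:=\bar g(V,\nu),
\]
and in particular $F^{ii}\nabla_{ii}\Psi=(n-1)\phi'\sigma_1-2u\sigma_2$, where $F^{ij}=\partial\sigma_2/\partial h_{ij}=\sigma_1\delta^{ij}-h^{ij}$ is the linearization. At an interior maximum $x_0$ of $W$, diagonalizing $h_{ij}$ with $\kappa_1=\kappa_{\max}\geq\cdots\geq\kappa_n$, I would compute $F^{ii}\nabla_{ii}\log\kappa_1$ by differentiating \eqref{scalar} twice, applying Codazzi, Simons' identity, and commuting covariant derivatives through the ambient Riemann tensor of $(N,\bar g)$, to obtain an expression of the form
\[
F^{ii}\nabla_{ii}h_{11}=\nabla_{11}\sigma_2-F^{pq,rs}h_{pq,1}h_{rs,1}+\mathcal{E},
\]
with $\mathcal{E}$ collecting cubic-in-$h$ and ambient curvature terms. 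Combining with $F^{ii}\nabla_{ii}f(\Psi)=f'[(n-1)\phi'\sigma_1-2u\sigma_2]+f''F^{ii}(\nabla_i\Psi)^2$ and substituting the critical-point identity $h_{11,i}/\kappa_1=-f'(\Psi)\nabla_i\Psi$ produces a pointwise inequality at $x_0$ whose leading behavior in $\kappa_1$ must be nonpositive.

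The principal obstacle is the degenerate ellipticity when $\sigma_2(\kappa)\to 0$: the operator $F^{ij}$ is no longer uniformly elliptic, the quadratic form $F^{pq,rs}h_{pq,1}h_{rs,1}$ has no definite sign for $\sigma_2$ (unlike the situation for concave $\sigma_k^{1/k}$ in the admissible cone), and $\mathrm{tr}(F)=(n-1)\sigma_1$ can be small or even of the wrong sign, so the usual mechanisms for absorbing bad third-order and cubic terms break down. The hypothesis $\phi'>0$ is decisive here: by choosing $f$ with appropriate sign of $f'$ and sufficient convexity (for example $f(t)=e^{-\alpha t}$ or $f(t)=-\log(A-t)$ for suitably chosen $\alpha$, $A$), the term $f'(n-1)\phi'\sigma_1$ from $F^{ii}\nabla_{ii}\Psi$ together with the convex contribution $f''F^{ii}(\nabla_i\Psi)^2$ should combine into a positive quantity of order $\kappa_1^2$ that dominates the cubic and ambient-curvature errors once $\kappa_1$ is large. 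Forcing a contradiction at $x_0$ then yields $\kappa_1(x_0)\leq C$, establishing \eqref{estH}.
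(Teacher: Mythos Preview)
Your overall architecture (maximum principle on a test function involving the warped height $\Psi=\Phi\circ X$) is right, but the mechanism you propose for closing the argument does not work, and the two ideas that actually do the work are absent from the plan.

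First, the good terms you identify are only of order $\kappa_1$, not $\kappa_1^2$: since $\sum_i F^{ii}=(n-1)\sigma_1$ and $|\nabla\Psi|\le\phi$, both $f'(n-1)\phi'\sigma_1$ and $f''F^{ii}(\nabla_i\Psi)^2$ scale like $\sigma_1\sim\kappa_1$; no choice of auxiliary $f$ changes this. Correspondingly, your worry that $\mathrm{tr}(F)=(n-1)\sigma_1$ may be small or negative at the maximum point is unfounded: the Gauss equation gives $\kappa_1\kappa_i=R_{1i1i}-\bar R_{1i1i}$ bounded, hence $|\kappa_i|\le C/\kappa_1$ for $i\neq1$, so $\sigma_1\approx\kappa_1$ there. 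This same Gauss-equation observation is what makes the cubic terms harmless (e.g.\ $F^{ii}h_{ii}^2=\sigma_1\sigma_2-3\sigma_3=O(\kappa_1)$ and $F^{11}=\sum_{j\neq1}\kappa_j=O(\kappa_1^{-1})$), and it is the first missing ingredient.

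Second, the third-order term $-F^{pq,rs}h_{pq,1}h_{rs,1}$ that you flag cannot be absorbed by convexity of $f$; the paper controls its diagonal part $-\sum_{p\neq q}h_{pp,l}h_{qq,l}$ via a hyperbolicity inequality for $\sigma_2$ (valid even when $\sigma_2=0$): one has $-\sum_{p\neq q}h_{pp,l}h_{qq,l}\ge \min\{-2(\sigma_2)_l(\sigma_1)_l/\sigma_1,0\}$. This bound is only useful if the critical-point condition controls $(\sigma_1)_l=H_l$, which is precisely why the paper takes $\log|H|$ rather than $\log\kappa_{\max}$ as the test function. With $\varphi=\log|H|+\alpha\Psi/m$ and $m=\inf\phi'$, the critical condition reads $H_l=-\alpha H\Psi_l/m$, so the hyperbolicity bound and the off-diagonal positive piece $\sum_{p\neq q}h_{pq,l}^2$ together reduce all third-order contributions to $O(\kappa_1)+O(\alpha)+O(\alpha^2/\kappa_1)$. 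The $\Psi$-term then contributes $(n-1)\alpha\phi'\sigma_1/m\ge(n-1)\alpha\sigma_1$, and choosing $\alpha$ large forces $\kappa_1\le C$. Your plan with $\log\kappa_{\max}$ gives a critical condition on $h_{11,i}$, not on $H_i$, so the hyperbolicity lemma does not immediately apply and the degenerate case $\sigma_2\to0$ remains unresolved.
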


When $(N^{n+1}, \bar g)$ is the standard Euclidean space $\mathbb R^{n+1}$, estimate (\ref{estH}) was proved in \cite{LW} with an explicit constant for embedded hypersurfaces with nonnegative sectional curvature. It was observed in \cite{LW} that scalar curvature equation is the key for curvature estimate for isometric embedding problem in high dimensions. 

If $(N, \bar g)$ is not an Einstein manifold, the right hand side of equation (\ref{scalar}) depends on the normal $\nu$ in a nontrivial way. It is a fully nonlinear equation of the form
\begin{equation}\label{gF}F(\kappa_1,\cdots,\kappa_n)=f(X, \nu)>0.\end{equation}
This is a curvature type equation arising from many classical geometric problems, like the prescribing general Weingarten curvature problem \cite{Alex, CNS, GG}, the problem of prescribing curvature measures in convex geometry \cite{Alex1, GLM, GLL}. For hypersurfaces in $\mathbb R^{n+1}$, if $f$ is independent of normal vector $\nu$, curvature estimate has been obtained by Caffarelli-Nirenberg-Spruck \cite{CNS} for a general class of fully nonlinear operators $F$. When $f$ depends on $\nu$, in contrast to estimates in \cite{CNS}, curvature estimates for solutions of equation (\ref{gF}) are not true in general. For $F=\frac{\sigma_k}{\sigma_l}$ with $n\ge k>l\ge 1$ fixed in $\mathbb R^{n+1}$, there exists a sequence of strictly convex solutions of (\ref{gF}) with unbounded second fundamental forms while the prescribed functions $f(X, \nu)$ in (\ref{gF}) are bounded from below and above  and with uniform $C^3$ bounds (Theorem 2 in  \cite{GRW}). This indicates subtlety of the issue. The curvature estimate was established recently by Guan-Ren-Wang \cite{GRW} for convex hypersurfaces when $F$ in (\ref{gF}) is an elementary symmetric function $\sigma_k, 1\le k\le n$.  It's also proved in \cite{GRW} that,  curvature estimate holds for general starshaped admissible solutions of equation (\ref{gF}) when $F=\sigma_2$.  Spruck-Xiao \cite{SX} subsequently found a very nice simplified proof of the estimate in \cite{GRW} for $F=\sigma_2$, their estimate is also valid in general space form.  On the other hand, estimates obtained in \cite{CNS, GG, GRW, SX} depend also on the lower bound of $f$. 

Estimate (\ref{estH}) does not depend on the lower bound of $\sigma_2(\kappa)$.   In fact, the only requirement in Theorem \ref{thm1} is the extrinsic scalar curvature $\sigma_2(\kappa)\ge 0$, the usual assumption $\kappa \in \bar\Gamma_2$ (definition (\ref{garding2}) in next section) is not imposed. The estimate in Theorem \ref{thm1} yileds $C^{1,1}$ regularity of general immersed hypersurfaces in $(N^{n+1}, \bar g)$ for all $n\ge 2$ with nonnegative extrinsic scalar curvature $\sigma_2(h_{ij})$. The higher regularities of immersed hypersurface will follow if the extrinsic scalar curvature is strictly positive. In dimension $2$, this is a consequence of Nirenberg's work \cite{N1}. For higher dimensions, it follows from Evans-Krylov theorem \cite{E, Kr}. 
\medskip

Estimate (\ref{estH}) is also valid for a general class of ambient spaces.
\begin{theorem}\label{thm2}
Suppose that $X: (M^n,g)\to (N, \bar g)$ is a $C^4$ immersed compact hypersurface with nonnegative extrinsic scalar curvature. Then estimate (\ref{estH}) holds provided that there is $\Phi\in C^2(M)$ such that
\begin{eqnarray}\label{condPhi}  \Phi_{ij}(x)\ge C_1 g_{ij}(x)-C_2h_{ij}(x), \quad \forall x\in M, \end{eqnarray}
for some positive constant $C_1>0, C_2>0$, where $C$ in (\ref{estH}) depend only on $n$, $\|g\|_{C^4(M)}$, $\|\bar{g}\|_{C^4}$,    $\sup_{x\in M}\Phi(X(x))$ and $C_1, C_2$ in (\ref{condPhi}). Besides the warped product space stated in Theorem \ref{thm1}, condition (\ref{condPhi}) is satisfied in each of the following cases: \begin{enumerate}
\item $(N,\bar g)$ is a complete non-compact Riemannian manifold with nonnegative sectional curvature.
\item  There exist $k\ge 0, r>0$ and $p\in N$ such that  $r\le \min\{ inj(p), \frac{\pi}{2\sqrt{k}}\} $,   $M\subset B_{r}(p)$,  and $K_N(X)\leq k, \forall X\in B_{r}(p) $, where $K_N(X)=\sup\{K(e_i,e_j)| \forall e_i,e_j\in T_XN\}$ and $inj(p)$ is the injectivity radius at $p$ and $B_{r}(p)$ is the geodesic ball centred at $p$ of radius $r$.
\end{enumerate}
\end{theorem}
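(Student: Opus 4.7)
The plan is to prove the two assertions in turn: first that the curvature estimate (\ref{estH}) follows from the abstract hypothesis (\ref{condPhi}), then that (\ref{condPhi}) is satisfied in each of the three listed geometric settings.

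For the first assertion I would reuse the maximum-principle scheme developed for Theorem \ref{thm1}, abstracting the role of the warped-product potential $\int_{0}^{r}\phi(s)\,ds$ through (\ref{condPhi}). The plan is to apply the maximum principle to a test function of the form $W=\log\lambda_{1}+A\Phi$, with $\lambda_{1}$ the largest principal curvature and $A>0$ a large constant, first in the nondegenerate case $\sigma_{2}(\kappa)>0$. On a connected hypersurface, $\sigma_{2}>0$ together with $2\sigma_{2}=\sigma_{1}^{2}-|h|^{2}$ forces $\sigma_{1}$ to have a fixed sign, so a choice of normal places $\kappa$ inside the Garding cone $\Gamma_{2}$, making the linearized operator $L=F^{ij}\nabla_{i}\nabla_{j}$ with $F^{ij}=\partial\sigma_{2}/\partial h_{ij}$ uniformly elliptic. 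At an interior maximum of $W$ the Simons identity applied to $L\log\lambda_{1}$ produces dangerous third-order contributions; the only way to absorb them is through a large positive lower bound for $L\Phi$. Contracting (\ref{condPhi}) against the positive semidefinite $F^{ij}$ and using $F^{ij}h_{ij}=2\sigma_{2}$ yields
\[
L\Phi\;\ge\;C_{1}\operatorname{tr}(F)-2C_{2}\sigma_{2},
\]
and the term $C_{1}\operatorname{tr}(F)$ is precisely what dominates the bad Simons terms, independently of any positive lower bound on $\sigma_{2}$. Choosing $A$ sufficiently large then forces $\lambda_{1}$ to be bounded in terms of the data, and passage to the degenerate limit $\sigma_{2}\searrow 0$ completes this step; the bound (\ref{estH}) then follows from $|h|^{2}=\sigma_{1}^{2}-2\sigma_{2}\le\sigma_{1}^{2}$.

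For the second assertion I would exhibit $\Phi$ explicitly in each case. In the warped-product ambient of Theorem \ref{thm1}, $\Phi(X)=\int_{0}^{r(X)}\phi(s)\,ds$ satisfies $\bar\nabla^{2}\Phi=\phi'(r)\bar g$, and the Gauss formula $\Phi_{ij}=\bar\nabla^{2}\Phi(e_{i},e_{j})-\langle\bar\nabla\Phi,\nu\rangle h_{ij}$ then gives
\[
\Phi_{ij}=\phi'(r)g_{ij}-\phi(r)\langle\partial_{r},\nu\rangle\,h_{ij},
\]
which is (\ref{condPhi}) with $C_{1}=\inf_{M}\phi'$ and $C_{2}=\sup_{M}\phi$. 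In case (1) I would invoke the Cheeger-Gromoll soul theorem together with its Sharafutdinov-Perelman smoothings to obtain a smooth strictly convex exhaustion $u\in C^{\infty}(N)$ with $\bar\nabla^{2}u\ge c_{0}\bar g$ on a neighbourhood of $X(M)$; then $\Phi=u\circ X$ satisfies (\ref{condPhi}) with $C_{1}=c_{0}$ and $C_{2}=\sup_{M}|\bar\nabla u|$. In case (2) I take $\Phi(X)=\tfrac{1}{2}d(p,X)^{2}$; under $K_{N}\le k$ on $B_{r}(p)$ with $r\le\min\{inj(p),\pi/(2\sqrt{k})\}$, the Hessian comparison theorem gives $\bar\nabla^{2}d\ge\sqrt{k}\cot(\sqrt{k}d)(\bar g-dr\otimes dr)$ on $B_{r}(p)\setminus\{p\}$, so $\bar\nabla^{2}\Phi\ge c_{1}(k,r)\bar g$ for a positive $c_{1}$; combined with $|\bar\nabla\Phi|\le r$ this yields (\ref{condPhi}) exactly as in case (1).

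The main obstacle lies in the first assertion. Because no positive lower bound on $\sigma_{2}$ is assumed, the estimate must remain uniform as the equation degenerates, and the independence of the favourable term $C_{1}\operatorname{tr}(F)$ from $\sigma_{2}$ is exactly what makes this possible. Verifying that $C_{1}\operatorname{tr}(F)$ genuinely dominates the Simons third-order terms in $L\log\lambda_{1}$, together with the mixed first-order terms produced by the critical-point relation for $W$, is the main analytical work; once (\ref{condPhi}) is in hand, however, the transfer of the Theorem \ref{thm1} argument to the present abstract setting is essentially routine, since the warped-product structure was only used through precisely this inequality.
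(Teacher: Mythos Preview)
Your plan is essentially the paper's own: the first assertion is handled by observing that the warped-product identity $\Phi_{ii}=\phi'(\rho)-u\,h_{ii}$ enters the proof of Theorem~\ref{thm1} only through the single inequality $\sigma_2^{ii}\Phi_{ii}\ge C_1(n-1)H-C$, and any $\Phi$ satisfying (\ref{condPhi}) reproduces that inequality; the second assertion is handled by Hessian comparison for $\Phi=\rho^2$ in case~(2) and by a global convex function in case~(1). Two small deviations are worth flagging. First, the paper's test function is $\log|H|+\alpha\Phi/m$ rather than $\log\lambda_1+A\Phi$; working with $H$ avoids the eigenvalue-multiplicity issues and, more importantly, feeds directly into the Gauss-equation trick (Lemma~\ref{lemm 3.1}) that bounds $|h_{ii}|$ for $i\neq 1$ by $C/h_{11}$, which is what kills $\sigma_2^{ii}h_{ii}^2$ and the third-order terms. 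Second, for case~(1) the paper cites the Greene--Wu $C^\infty$ convex exhaustion \cite{GW2} rather than soul-theoretic smoothings; either route produces the needed uniformly convex $\Phi$ on a neighbourhood of $X(M)$.

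One point deserves tightening. Your phrase ``passage to the degenerate limit $\sigma_2\searrow 0$'' suggests approximating the given hypersurface by ones with strictly positive extrinsic scalar curvature, but no such approximation is available in general. The paper does \emph{not} argue by approximation: it works directly at the maximum point, and the degenerate case $\sigma_2(x_0)=0$ is absorbed into Lemma~\ref{lemma2}, which shows $-\sigma_2(W_m,W_m)\ge 0$ at such points because $\nabla_m\sigma_2=0$ there (minimum of a nonnegative function). That pointwise statement, together with $\sigma_1(x_0)\neq 0$, is what makes the third-order term $-\sigma_2^{pq,rs}h_{pql}h_{rsl}$ harmless uniformly in $\sigma_2\ge 0$; you should invoke this directly rather than an unspecified limiting procedure.
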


\medskip

The organization of the paper is as follow. In the next section, we obtain a priori bounds for the gradient and the Laplace operator of the extrinsic scalar curvature $\sigma_2(h_{ij}(x))$ in terms of intrinsic and extrinsic geometries.  Section 3 is devoted to the proof of Theorem \ref{thm1} and Theorem \ref{thm2}. The existence of isometric embedding of $(\mathbb S^2, g)$ to $(N^3,\bar g)$, in particular for ambient spaces like the Anti-de Sitter-Schwarzschild manifolds and  the Reissner-Nordstr\"om manifolds,  will be discussed in section 4.

\section{preliminary estimates}

We consider equation (\ref{gF}) as a degenerate fully nonlinear equation where $f$ depends on $\nu$ in non-trivial way.  This section devotes certain derivative estimates on $f$ defined in (\ref{gF}).

Let $(M^n,g)$ be an isometrically immersed  hypersurface  in an ambient Riemannian manifold $(N^{n+1}, \bar g)$. Denote $R_{ijkl}$ and $\bar{R}_{abcd}$ to be the Riemannian curvatures of $M$ and $N$ respectively. For a fixed local frame $(e_1, \cdots, e_n)$ on $M$, let $\nu$ be a normal vector field of $M$, and let $h=(h_{ij})$ be the second fundamental form of $M$ with respect to $\nu$. We have the Gauss equation and Codazzi equation,
\begin{align}\label{Gauss}
R_{ijkl}=\bar{R}_{ijkl}+h_{ik}h_{jl}-h_{il}h_{jk}, \quad (Gauss)
\end{align}
\begin{align}\label{Codazzi}
\nabla_k h_{ij}=\nabla_j h_{ik}+\bar{R}_{\nu ijk} .\quad (Codazzi)
\end{align}
The convention that $R_{ijij}$ denotes the sectional curvature is used here.

The following commutator formulas will be used through out the paper,
\begin{align}\label{comm}
\nabla_i\nabla_jh_{kl}=&\nabla_k\nabla_lh_{ij}-h_{ml}(h_{im}h_{kj}-h_{ij}h_{mk})-h_{mj}(h_{mi}h_{kl}-h_{il}h_{mk})\\ \nonumber
&+h_{ml}\bar{R}_{ikjm}+h_{mj}\bar{R}_{iklm}+\nabla_k\bar{R}_{ijl\nu}+\nabla_i\bar{R}_{jkl\nu},
\end{align}
\begin{align}\label{Curv-deri}
\nabla_i\bar{R}_{jkl\nu}=\bar{\nabla}_i\bar{R}_{jkl\nu}-h_{ij}\bar{R}_{\nu kl\nu}-h_{ik}\bar{R}_{j\nu l\nu}+h_{im}\bar{R}_{jklm}.
\end{align}

Take trace of the Gauss equation,
\begin{eqnarray*}
Ric(i,i)=\bar{Ric}(i,i)-\bar{R}_{i\nu i\nu}+\sum_j\left( h_{ii}h_{jj}-h_{ij}^2\right),\end{eqnarray*}
and the scalar curvature of $M$ is,
\begin{eqnarray*}R=\bar{R}-2\bar{Ric}(\nu,\nu)+2\sigma_2(h).
\end{eqnarray*}
It follows that,
\begin{equation}\label{f-scalar} \sigma_2(h(x)) =f(x, \nu(x)), \forall x\in M,\end{equation}
where
\begin{eqnarray}\label{f-eq} f(x, \nu(x))= \frac{R(x)-\bar{R}(X(x))}2+\bar{Ric}_{X(x)}(\nu(x),\nu(x)).\end{eqnarray}

\medskip

Denote $\mathcal{S}_n$ the collection of $n\times n$ symmetric matrices. Define Garding's $\Gamma_2$ cone as
\begin{equation} \label{garding2}\Gamma_2=\{A\in \mathcal{S}_n | \sigma_1(A)>0, \sigma_2(A)>0.\}\end{equation}
Denote $\bar \Gamma_2$ the closure of $\Gamma_2$.
The following lemma is a special case of Lemma 3.2 in \cite{GLL}, we also deal with degenerate case $\sigma_2=0$.
Here we give a proof using the fact $\sigma_2$ is hyperbolic in $\Gamma_2$ (i.e., $\sigma_2(x+ty)=0$ as a polynomial
of $t \in \mathbb{C}$ has only real roots $\forall x\in \mathbb R, y\in \Gamma_2$). 
\begin{lemma}\label{lemma2}
Let $W(x)=(h_{ij}(x))$ be a 2-symmetric tensor on $M$, suppose that $p\in M$, $W(p)$ is diagonal, $0\le \sigma_2(W(x))\in C^1$ in a neighborhood of point $p$,  and $\sigma_1(W(p))\neq 0$. For each $m=1,...,n$, denote
\[W_m(p)=(\nabla_m h_{11}(p), \cdots, \nabla_m h_{nn}(p)).\]
then at $p$,
\begin{align}
- \sigma_2(W_m,W_m) \geq \min\{-2\frac{\nabla_m\sigma_2(W)\nabla_m\sigma_1(W)}{\sigma_1(W)}+2\frac{(\nabla_m\sigma_1(W))^2\sigma_2(W)}{\sigma_1^2(W)},0\}.
\end{align}
and
\begin{align*}
- \sigma_2(W_m,W_m) \geq \min\{-2\frac{\nabla_m\sigma_2(W)\nabla_m\sigma_1(W) }{\sigma_1(W)}+\frac{(\nabla_m\sigma_2(W))^2\sigma_2(I,I)}{\sigma_2^2(W,I)},0\}.
\end{align*}
\end{lemma}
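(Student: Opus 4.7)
The plan is to combine the G{\aa}rding hyperbolicity inequality for $\sigma_2$,
\[
\sigma_2(x,\xi)^2 \,\ge\, \sigma_2(\xi,\xi)\,\sigma_2(x,x) \quad \text{for every } x\in\mathbb R^n,\ \xi\in\bar\Gamma_2,
\]
with a careful choice of an auxiliary vector $Y$ for each of the two claimed inequalities. Because $\sigma_2$ is even, we may replace $\kappa=(\kappa_1,\ldots,\kappa_n)$ by $-\kappa$ if necessary, so without loss of generality $\sigma_1(W(p))>0$ and hence $\kappa\in\bar\Gamma_2$; this substitution leaves every expression in the statement unchanged. A preliminary computation in diagonal coordinates at $p$, using $\nabla_m\sigma_2(W)=\sum_i(\sigma_1(W)-\kappa_i)\,\nabla_m h_{ii}$, records the two polarization identities
\[
\sigma_2(W_m,\kappa)=\nabla_m\sigma_2(W),\qquad \sigma_2(W_m,I)=(n-1)\,\nabla_m\sigma_1(W).
\]

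For the first inequality I would set
\[
Y_1:=W_m-\frac{\nabla_m\sigma_1(W)}{\sigma_1(W)}\,\kappa,
\]
which is constructed so that $\sigma_1(Y_1)=0$, equivalently $\sigma_2(Y_1,I)=0$. Applying hyperbolicity with $\xi=I\in\Gamma_2$ then forces $\sigma_2(Y_1,Y_1)\le 0$. Expanding this by bilinearity and substituting the polarization identities above produces the first bound; the ``$\min\{\cdot,0\}$'' on the right side is cosmetic, as the direct inequality implies the stated form in both sign regimes of the explicit expression.

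For the second inequality I would set
\[
Y_2:=W_m-\frac{\nabla_m\sigma_2(W)}{\sigma_2(W,I)}\,I,
\]
tailored so that $\sigma_2(Y_2,\kappa)=0$. Hyperbolicity with $\xi=\kappa$ then gives $\sigma_2(\kappa,\kappa)\,\sigma_2(Y_2,Y_2)\le 0$; whenever $\sigma_2(W)>0$ this yields $\sigma_2(Y_2,Y_2)\le 0$, and the expansion produces the second bound verbatim after substituting the polarization identities.

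The only genuine obstacle is the degenerate case $\sigma_2(W)=0$, in which the $\xi=\kappa$ comparison collapses to $0\le 0$. I would resolve this by approximation: replace $\kappa$ with $\kappa_\epsilon:=\kappa+\epsilon I\in\Gamma_2$ for small $\epsilon>0$, apply hyperbolicity with $\xi=\kappa_\epsilon$, divide through by $\epsilon$, and let $\epsilon\to 0^+$. Since $\sigma_2(Y_2,\kappa_\epsilon)=O(\epsilon)$ while $\sigma_2(\kappa_\epsilon,\kappa_\epsilon)$ is of order $\epsilon\,\sigma_1(W)$, the limit yields $\sigma_2(Y_2,Y_2)\cdot (n-1)\sigma_1(W)\le 0$, and the assumption $\sigma_1(W)\neq 0$ closes the argument.
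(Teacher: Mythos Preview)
Your proposal is correct and follows essentially the same route as the paper: both arguments choose the same two auxiliary vectors $Y_1=W_m-\tfrac{\sigma_1(W_m)}{\sigma_1(W)}\kappa$ and $Y_2=W_m-\tfrac{\sigma_2(W_m,\kappa)}{\sigma_2(\kappa,I)}I$, use G{\aa}rding hyperbolicity to force $\sigma_2(Y_j,Y_j)\le 0$, and handle the degenerate case $\sigma_2(\kappa)=0$ by perturbing $\kappa\mapsto\kappa+\epsilon I$ and passing to the limit. The only cosmetic difference is that the paper packages the degenerate step as a standalone claim and explicitly invokes $\nabla_m\sigma_2(W)=0$ at a minimum (this is where the $C^1$ and nonnegativity-in-a-neighborhood hypotheses enter), whereas your limiting computation bypasses that observation; in effect your version needs only the pointwise conditions $\sigma_2(W(p))\ge 0$ and $\sigma_1(W(p))\neq 0$.
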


\begin{proof}
We first prove

\noindent
{\bf Claim:} Suppose that $W, V$ satisfy $\sigma_1(W)\neq 0$, $\sigma_2(W)\ge 0$ and  $\sigma_2(V,W)=0$, then $\sigma_2(V,V)\leq 0$.

We may assume $\sigma_1(W)>0$ by switching $W$ to $-W$ if necessary. The {\it claim} follows from the hyperbolicity of $\sigma_2$ in $\Gamma_2$ (see \cite{G}) if $\sigma_2(W)>0$. The degenerate case $\sigma_2(W)=0$ can be dealt as follow.  Set  $W_{\epsilon}=W+\epsilon I$ and $V_{\epsilon}=V-\frac{\epsilon \sigma_1(V)I}{\sigma_1(W)+\epsilon\sigma_2(I,I)}$.  Since $\sigma_1(W)>0$, $\forall \epsilon>0$,   $W_{\epsilon}\in \Gamma_2$ and $\sigma_2(W_{\epsilon}, V_{\epsilon})=0$. By the hyperbolicity of $\sigma_2$ in $\Gamma_2$,  $\sigma_2(V_{\epsilon},V_{\epsilon})\leq 0$. The {\it claim} follows by taking $\epsilon\to 0$. Note that the {\it claim} may not be true if the condition $\sigma_1(W)\neq 0$ is dropped. If $\sigma_1(W)=0$, as $\sigma_2(W)\ge 0$, we must have $\sigma_2(W)=0$ which in turn implies $W=0$. So $\sigma_2(W,V)=0, \forall V$. In particular any $V\in \Gamma_2$ would violate the {\it claim}.

Now back to the proof of the lemma. Denote  $W_m=(\nabla_mh_{ii})$ and $\nabla_{m}\sigma_2(W)=(\sigma_2(W))_m$.
If $\sigma_2(W(p))=0$, since $\sigma_2(W(x))\ge 0$ near $p$ and $W$ is diagonal at $p$, we have $0=(\sigma_2(W))_m=\sigma_2(W_m,W)$ at $p$. By the assumption and the {\it claim}, $\sigma_2(W_m,W_m)\leq 0$ at $p$.  

If $\sigma_2(W(p))>0$, we have $W(p)\in \Gamma_2$. Set, $V=W_m -\frac{\sigma_2(W,W_m)}{\sigma_2(W,I)}I$. So, $\sigma_2(W, V)=0$. By Garding \cite{G}
$\sigma_2(V,V)\le 0$, that is ,
\begin{eqnarray*} 0 \ge \sigma_2(V,V)= \sigma_2(W_m,W_m)-2\frac{\sigma_2(W,W_m)\sigma_2(W_m,I)}{\sigma_2(W,I)
}+\frac{\sigma_2^2(W,W_m)\sigma_2(I,I)}{\sigma_2^2(W,I)}.\end{eqnarray*}

In turn,
\begin{eqnarray*} - \sigma_2(W_m,W_m) &\ge&  -2\frac{\sigma_2(W,W_m)\sigma_2(W_m,I)}{\sigma_2(W,I)}+\frac{\sigma_2^2(W,W_m)\sigma_2(I,I)}{\sigma_2^2(W,I)}\\
&=&-2\frac{\nabla_m\sigma_2(W)\nabla_m\sigma_1(W)}{\sigma_1(W)}+\frac{(\nabla_m\sigma_2(W))^2\sigma_2(I,I)}{\sigma_2^2(W,I)}.\end{eqnarray*}
as $\sigma_2(W,W_m)=\nabla_m\sigma_2(W)$, $\sigma_2(W_m,I)=(n-1)\nabla_m\sigma_1(W)$ and $\sigma_2(W,I)=(n-1)\sigma_1(W)$.

This fulfills the second inequality. Now let's prove the first inequality. At point $p$, If $\sigma_1(W_m)=0$, then $\sigma_2(W_m,W_m)\leq 0$. Suppose now $\sigma_1(W_m)\neq 0$, let $V=W_m-\frac{\sigma_1(W_m)}{\sigma_1(W)}W$, then $\sigma_1(V)=0$, thus $\sigma_2(V,V)\leq 0$, i.e.
\begin{align*}
0\geq \sigma_2(V,V)=\sigma_2(W_m,W_m)-2\frac{\sigma_1(W_m)\sigma_2(W_m,W)}{\sigma_1(W)}+\frac{\sigma_1^2(W_m)\sigma_2(W,W)}{\sigma_1^2(W)}.
\end{align*}
In turn,
\begin{align*}
-\sigma_2(W_m,W_m)\geq -2\frac{\nabla_m\sigma_1(W)\nabla_m\sigma_2(W)}{\sigma_1(W)}+2\frac{(\nabla_m\sigma_1(W))^2\sigma_2(W)}{\sigma_1^2(W)}.
\end{align*}
The lemma is now proved.
\end{proof}

\begin{lemma}\label{lemma5}
\begin{align}\label{formula}
|\Delta_g f(x)|\leq C(\sum_{i,j} |h_{ij}(x)|^2+|\nabla H|+1),
\end{align}
for any $x\in M$, where $C$ depends on $\|g\|_{C^4}$ and $\|\bar{g}\|_{C^4}$.
\end{lemma}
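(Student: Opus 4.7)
The plan is to split $f$ as in (\ref{f-eq}) into three pieces and estimate the Laplacian of each. The intrinsic scalar curvature $R$ depends only on $g$, so $|\Delta_g R|\le C(\|g\|_{C^4(M)})$. For the ambient pullback $\bar R\circ X$ I would use the standard restriction formula
\[
\Delta_g(\bar R\circ X)=\mathrm{tr}_g\bigl(\bar\nabla^2\bar R|_{TM}\bigr)+H\,\bar\nabla_\nu\bar R,
\]
whose first summand is $O(\|\bar g\|_{C^3})$, and whose second obeys $|H\,\bar\nabla_\nu\bar R|\le \tfrac{1}{2}|H|^2+C\le C(1+\sum_{i,j}h_{ij}^2)$ by the Cauchy inequality. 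Both contributions already fit inside the $|h|^2+1$ part of (\ref{formula}); no $|\nabla H|$ has yet appeared.

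The substantive piece is $\psi(x):=\bar{Ric}_{X(x)}(\nu(x),\nu(x))$, which I would view as the pullback of a smooth function $\Psi(X,\xi)$ on the sphere bundle of $N$ via $x\mapsto(X(x),\nu(x))$. At a fixed $p\in M$ I would pick a local orthonormal frame $\{e_i\}$ that is $g$-parallel at $p$ and diagonalizes $h(p)$, so that $\nabla_i\nu=-h_{ij}e_j$ by Weingarten. A direct computation gives
\[
\nabla_i\psi=(\partial_X\Psi)(e_i)-h_{ij}(\partial_\xi\Psi)(e_j)=:A_i-h_{ij}B_j,
\]
where $A_i$ and $B_j$ are smooth functions of $(X,\nu)$ bounded in terms of $\|\bar g\|_{C^3}$. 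Differentiating once more and tracing produces three classes of terms: $\nabla_i A_i$, of size $O(1+|h|)$; $h_{ij}\nabla_i B_j$, of size $O(|h|+|h|^2)$; and the critical contribution $-(\nabla_i h_{ij})B_j$.

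For the critical term I would invoke the Codazzi equation (\ref{Codazzi}) to rewrite
\[
\sum_i\nabla_i h_{ij}=\nabla_j H+\sum_i\bar R_{\nu i j i},
\]
converting the apparent $|\nabla h|$ dependence into $|\nabla H|$ plus an $O(\|\bar g\|_{C^2})$ curvature error. Combining these bounds and absorbing the lone linear-$h$ terms via Cauchy gives $|\Delta_g\psi|\le C(1+\sum_{i,j}h_{ij}^2+|\nabla H|)$, and adding the earlier contributions yields (\ref{formula}). The main technical subtlety, and the reason the lemma produces $|\nabla H|$ in place of the weaker $|\nabla h|$, is the careful bookkeeping of index contractions that guarantees every appearance of $\nabla h$ in $\Delta_g\psi$ arrives in the divergence combination $\sum_i\nabla_i h_{ij}$ on which Codazzi can act.
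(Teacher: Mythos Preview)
Your proposal is correct and follows essentially the same route as the paper: differentiate $f$ via the chain rule, use the Weingarten relation to produce the $\nabla h$ contribution from the $\nu$-dependence of $\bar{Ric}(\nu,\nu)$, and then apply Codazzi to convert the divergence $\sum_i\nabla_i h_{ij}$ into $\nabla_j H$ plus a bounded ambient-curvature error. Your preliminary splitting of $f$ into $R$, $\bar R\circ X$, and $\bar{Ric}(\nu,\nu)$ is a clean organizational choice, but the analytical content is the same as the paper's direct coordinate computation culminating in (\ref{formula1}).
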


\begin{proof}
$\forall x_0\in M\subset N$, fix a local orthonormal coordinates $(x_1, \cdots, x_n)$ at $x_0\in M$,  a local orthonormal coordinates $(X_1, \cdots, X_{n+1})$ of $x_0\in N$. View $\bar{Ric}$ as a function in $C^2(N\times \mathbb R^{n+1}\times \mathbb R^{n+1})$ locally, denote $\bar{Ric}_\alpha=\frac{\partial\bar{Ric}}{\partial X_\alpha}$. For each $X$ fixed, $\bar{Ric}(\xi, \eta)$ is a bilinear function of $\xi, \eta \in \mathbb R^{n+1}$.

Denote $X_{i}^{\alpha}=\frac{\partial X^\alpha}{\partial x_i}, X_{ii}^{\alpha}=\frac{\partial^2 X^\alpha}{\partial x_i^2}$, we have
\begin{align*}
f_i=\frac{1}{2}\left(R_i-\bar{R}_\alpha X_{i}^{\alpha}\right)+\bar{Ric}_\alpha (\nu,\nu)X_{i}^{\alpha}+2\bar{Ric}\left(\frac{\partial \nu}{\partial x_i},\nu\right),
\end{align*}
and
\begin{eqnarray*}
f_{ii}&=&\frac{R_{ii}-\bar{R}_{\alpha\beta}X_{i}^{\alpha}X_{i}^{\beta}-\bar{R}_\alpha X_{ii}^{\alpha}}{2}+2\bar{Ric}\left(\frac{\partial^2 \nu}{\partial x_i^2},\nu\right)+2\bar{Ric}\left(\frac{\partial \nu}{\partial x_i},\frac{\partial \nu}{\partial x_i}\right)\\
& &+\bar{Ric}_{\alpha} (\nu,\nu)X_{ii}^{\alpha}+4\bar{Ric}_\alpha\left(\frac{\partial \nu}{\partial x_i},\nu\right)X_{i}^{\alpha}+\bar{Ric}_{\alpha\beta} (\nu,\nu)X_{i}^{\alpha}X_{i}^{\beta}.
\end{eqnarray*}

Since
\begin{align*}
\frac{\partial \nu}{\partial x_i}=h_{ij}e_j,\quad \frac{\partial^2 \nu}{\partial x_i^2}=h_{iji}e_j-h^2_{ij}\nu,
\end{align*}
and
\begin{align*}
|\frac{\partial X^\alpha}{\partial x_i}|\leq C,\quad |\frac{\partial^2 X^\alpha}{\partial x_i^2}|\leq C(\sum_j|h_{ij}|+1).
\end{align*}
Thus, by Codazzi equation
\begin{eqnarray}\label{formula1}
f_{ii} & = & 2\sum_{jk} h_{ij}h_{ik}\bar{Ric}(j,k)-2\sum_j h_{ij}^2\bar{Ric}(\nu,\nu)+2\sum_j h_{iij}\bar{Ric}(j,\nu) \nonumber \\
&&-O(\sum_j |h_{ij}|+1).
\end{eqnarray}
Sum over $i$, (\ref{formula}) follows directly.
\end{proof}

\section{Proof of theorems}

Suppose $(N, \bar g)$ is a warped product space with an ambient metric $\bar g$ as
\begin{equation}
\bar{g}=d\rho^2+\phi^2(\rho)ds_{\mathbb S^n}^2
\end{equation}
where $ds_{\mathbb S^n}^2$ is the standard induced metric in $\mathbb{S}^n$, $\rho$ represents the distance from the origin.
The vector field $V=\phi(\rho)\frac{\partial}{\partial\rho}$ is a conformal Killing field in $N$.
For $\Phi(\rho)=\int^\rho_0\phi(r)dr$,
the following well-known fact (e.g. \cite{G-L}) will be used.
\begin{lemma}\label{lemma1} Let $(e_1,\cdots,e_n)$ be a local orthonormal frame of an oriented Riemannian manifold $(M^n,g)$  which is immersed as a hypersurface in $N$. Let $\Phi$ and $V$ defined as above. Let $\nu$ be a given unit normal and $h_{ij}$ be the second fundamental form of the hypersurface with respect to $\nu$. Then $\Phi|_M$ satisfies,
\begin{equation}
\nabla_i\nabla_j\Phi=\phi^\prime(\rho)g_{ij}-<V,\nu>h_{ij}
\end{equation}
where $\nabla$ is the covariant derivative with respect to $g$.
\end{lemma}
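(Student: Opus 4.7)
The plan is to derive the formula by first computing the ambient Hessian of $\Phi$ in $(N,\bar g)$ and then using the Gauss formula to pass from the ambient connection $\bar\nabla$ to the induced connection $\nabla$ on $M$. The key preliminary identity is that $\bar\nabla\Phi=V$ as vector fields on $N$, which follows immediately from the definition $\Phi(\rho)=\int_0^\rho\phi(r)\,dr$ since $\Phi$ depends only on the radial coordinate and $\bar g(\partial_\rho,\partial_\rho)=1$.

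Next, I would establish the conformal Killing identity
\[
\bar\nabla_X V=\phi'(\rho)\,X\qquad\text{for every vector field $X$ on $N$.}
\]
This is the heart of the lemma and uses the warped-product structure directly. I would verify it by splitting $X$ into its radial part (proportional to $\partial_\rho$) and its spherical part $Y$ (tangent to the $\mathbb S^n$-factor). For the radial part one uses $\bar\nabla_{\partial_\rho}\partial_\rho=0$, giving $\bar\nabla_{\partial_\rho}(\phi\,\partial_\rho)=\phi'\,\partial_\rho$. For the spherical part, the standard warped-product formula $\bar\nabla_Y\partial_\rho=\tfrac{\phi'}{\phi}Y$ (which is a Koszul-formula computation using that the sphere metric is independent of $\rho$) yields $\bar\nabla_Y V=Y(\phi)\partial_\rho+\phi\,\bar\nabla_Y\partial_\rho=\phi'\,Y$, since $Y(\phi)=0$. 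Consequently $\bar\nabla^2\Phi(X,Y)=\phi'(\rho)\bar g(X,Y)$, i.e.\ the ambient Hessian of $\Phi$ is conformal.

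To descend to $M$, pick a local orthonormal frame $(e_1,\dots,e_n)$ on $M$ at a point and compute
\[
\nabla_i\nabla_j\Phi=e_i\bigl(e_j(\Phi)\bigr)-(\nabla_{e_i}e_j)(\Phi).
\]
Since $e_j(\Phi)=\bar g(\bar\nabla\Phi,e_j)=\langle V,e_j\rangle$, differentiating gives
\[
e_i\langle V,e_j\rangle=\langle\bar\nabla_{e_i}V,e_j\rangle+\langle V,\bar\nabla_{e_i}e_j\rangle=\phi'(\rho)\,g_{ij}+\langle V,\nabla_{e_i}e_j\rangle\pm h_{ij}\langle V,\nu\rangle,
\]
by the Gauss formula $\bar\nabla_{e_i}e_j=\nabla_{e_i}e_j\pm h_{ij}\nu$ (sign according to the sign convention for $h_{ij}$ relative to $\nu$). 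Subtracting the tangential term $\langle V,\nabla_{e_i}e_j\rangle$ cancels, leaving exactly $\phi'(\rho)g_{ij}-\langle V,\nu\rangle h_{ij}$ with the convention used in the paper.

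The calculation is almost entirely routine; the only non-formal step is the conformal Killing identity $\bar\nabla_X V=\phi' X$, which is where the warped-product hypothesis enters. Once that is in hand, the rest is a bookkeeping exercise with the Gauss formula, and the cancellation of the tangential covariant derivative of $\Phi$ (pulled from $N$ back to $M$) is what leaves the clean formula stated in the lemma.
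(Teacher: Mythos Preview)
Your argument is correct and is in fact the standard derivation of this identity. The paper itself does not supply a proof; it simply states the formula as a ``well-known fact'' and cites \cite{G-L}. Your computation---identifying $\bar\nabla\Phi=V$, verifying the conformal Killing property $\bar\nabla_X V=\phi'(\rho)X$ via the warped-product connection formulas, and then applying the Gauss decomposition $\bar\nabla_{e_i}e_j=\nabla_{e_i}e_j-h_{ij}\nu$---is exactly how this lemma is proved in the references. The sign is consistent with the paper's convention $\partial_i\nu=h_{ij}e_j$, so the minus sign in front of $\langle V,\nu\rangle h_{ij}$ is correct.
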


Theorem \ref{thm1} follows from the explicit bound in the next theorem.

\begin{theorem}\label{thm1-1}
Let $(N, \bar g)$ be a warped product space where $\bar g$ defined as in (\ref{warp2}). Denote $\phi^{'}(\rho)=\frac{d \phi}{d \rho}$ and $\Phi(\rho)=\int^{\rho}_0\phi (r)dr$. Suppose $X: (M^n,g)\to (N, \bar g)$ is a $C^4$ immersed compact hypersurface with nonnegative extrinsic scalar curvature and $\phi^{'}>0$ in $M$, then there exist constants $A_1, A_2$ depending only on $n$, $\|g\|_{C^4}$ and $\|\bar{g}\|_{C^4}$ such that
\begin{equation}\label{estH-1}
\max_{x\in M,i=1,...,n}|\kappa_i(X(x))|\leq A_1\exp\left( A_2\frac{\sup_{x\in M} \Phi(X(x))-\inf_{x\in M} \Phi(X(x))}{\inf_{x\in M} \phi^\prime (X(x))} \right).
\end{equation}
\end{theorem}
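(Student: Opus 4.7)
The plan is to prove (\ref{estH-1}) by applying the maximum principle to the test function
\[
W(x) = \log \kappa_{\max}(x) + \alpha \Phi(X(x))
\]
with $\alpha$ a large positive constant proportional to $1/\inf_M \phi'$, and then to transfer the pointwise bound obtained at its maximizer to every other point via the weight. Since $\sigma_2(\kappa)$ is even, I may flip the normal so that $\sigma_1(p_0) \geq 0$ at a maximizer $p_0$ of $W$, and choose a frame diagonalizing $h$ at $p_0$ with $h_{nn}(p_0) = \kappa_{\max}(p_0)$; the most negative principal curvature is handled by applying the same argument to $-h$. A short algebraic argument using $\sigma_2 \geq 0$, $\sigma_1 \geq 0$ and $h_{nn} = \max_i h_{ii}$ forces $\sigma_1(p_0) \geq h_{nn}(p_0)$, which in particular makes $\sigma_2^{ii}(p_0) = \sigma_1(p_0) - h_{ii}(p_0) \geq 0$ for every $i$, so the linearized operator $\sigma_2^{ii}\nabla_i^2$ is nonnegative at $p_0$. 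Provided the computation at $p_0$ yields a bound $h_{nn}(p_0) \leq A_1$ independent of $\alpha$, then for any $p \in M$,
\[
\log \kappa_{\max}(p) \leq \log A_1 + \alpha\bigl(\sup_M \Phi - \inf_M \Phi\bigr),
\]
which is (\ref{estH-1}) with $A_2 = \alpha \inf \phi'$.

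At $p_0$ the critical equation $\nabla_i h_{nn} = -\alpha h_{nn}\nabla_i\Phi$ together with $\sigma_2^{ii}\nabla_i^2 W \leq 0$ and Lemma \ref{lemma1} for $\sigma_2^{ii}\nabla_i^2\Phi$ give, after multiplying by $h_{nn}$,
\[
\sigma_2^{ii}\nabla_i^2 h_{nn} - \frac{\sigma_2^{ii}(\nabla_i h_{nn})^2}{h_{nn}} + \alpha h_{nn}\bigl[(n-1)\phi'\sigma_1 - 2\sigma_2\langle V,\nu\rangle\bigr] \leq 0.
\]
I would then commute indices using \eqref{comm}, writing $\sigma_2^{ii}\nabla_i^2 h_{nn} = \sigma_2^{ii}\nabla_n^2 h_{ii} + E$, where $E$ collects the commutator remainders $\sigma_2^{ii} h_{ii} h_{nn}(h_{nn}-h_{ii})$ together with ambient-curvature terms; and differentiate $\sigma_2(h) = f(x,\nu)$ twice along $e_n$ to replace
\[
\sigma_2^{ii}\nabla_n^2 h_{ii} = \nabla_n^2 f - \sigma_2^{ij,kl}\nabla_n h_{ij}\nabla_n h_{kl},
\]
with $|\nabla_n^2 f|$ controlled by $\sum h_{ij}^2 + |\nabla H|$ via Lemma \ref{lemma5}. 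At a diagonal point
\[
-\sigma_2^{ij,kl}\nabla_n h_{ij}\nabla_n h_{kl} = -2\sigma_2(W_n, W_n) + 2\sum_{i<j}(\nabla_n h_{ij})^2,
\]
with $W_n = (\nabla_n h_{11},\ldots,\nabla_n h_{nn})$, and the essential task is a useable lower bound on $-\sigma_2(W_n, W_n)$ that remains valid in the degenerate regime $\sigma_2(h(p_0)) = 0$.

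This lower bound is precisely the content of Lemma \ref{lemma2}, which replaces the (unavailable) concavity of $\sigma_2^{1/2}$ by a quantitative bound in terms of $\nabla_n \sigma_1 = \nabla_n H$ and $\nabla_n \sigma_2 = \nabla_n f$, the latter in turn controlled by the first-derivative version of Lemma \ref{lemma5}. Using Codazzi, $\nabla_i h_{nn} = \nabla_n h_{in} + O(1)$, so that the gradient quadratic $\sigma_2^{ii}(\nabla_i h_{nn})^2/h_{nn}$ can be partially absorbed into the positive sum $2\sum_{i<j}(\nabla_n h_{ij})^2$, with the remainder collected against the other first-derivative pieces. The resulting net inequality pits the sole positive term $\alpha(n-1)\phi' h_{nn}\sigma_1 \geq \alpha(n-1)\phi' h_{nn}^2$, coming from the Hessian of $\Phi$ and the bound $\sigma_1 \geq h_{nn}$, against ``bad'' terms of orders $h_{nn}^2$ and $\alpha^2 h_{nn}\sigma_2^{ii}(\nabla_i\Phi)^2 \lesssim \alpha^2 h_{nn}^2$; choosing $\alpha$ a large but fixed multiple of $1/\inf\phi'$ makes the $\Phi$-term dominate at high $h_{nn}$, producing $h_{nn}(p_0) \leq A_1$ depending only on $n$, $\|g\|_{C^4}$ and $\|\bar g\|_{C^4}$. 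The chief obstacle is the simultaneous presence of (i) degeneracy $\sigma_2(h) \geq 0$, which invalidates the standard concavity inequality for $\sigma_2^{1/2}$, and (ii) the non-trivial $\nu$-dependence of $f$, which through Lemma \ref{lemma5} feeds second-fundamental-form terms back into $\nabla^2 f$ of the same order as the quantities being estimated; Lemma \ref{lemma2} resolves (i) by exploiting the hyperbolicity of $\sigma_2$ together with $\sigma_2\geq 0$, and the delicate bookkeeping needed to keep the bound at $p_0$ independent of $\alpha$ is what produces the explicit exponential form in (\ref{estH-1}).
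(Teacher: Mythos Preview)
Your outline is a genuine variant of the paper's argument: you apply the maximum principle to $\log\kappa_{\max} + \alpha\Phi$, whereas the paper uses $\log|H| + \alpha\Phi/m$ with $m = \inf_M\phi'$. Both routes feed into Lemma~\ref{lemma2} and the Gauss--equation observation that $|h_{11}h_{ii}|$ is intrinsically bounded (the paper's Lemma~\ref{lemm 3.1}), but the choice of $\log|H|$ is not merely cosmetic.

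The point is that Lemma~\ref{lemma2} bounds $-\sigma_2(W_m,W_m)$ from below in terms of $\nabla_m\sigma_1 = \nabla_m H$ and $\nabla_m\sigma_2 = \nabla_m f$. With the paper's test function the critical equation at the maximizer reads $\nabla_m H = -(\alpha/m)\, H\,\nabla_m\Phi$, so $|\nabla_m H|$ is controlled \emph{automatically}, and Lemma~\ref{lemma2} feeds directly into~(\ref{3.10}). In your setup the critical equation controls only $\nabla_i h_{nn}$; you explicitly note that the Lemma~\ref{lemma2} bound involves $\nabla_n H$ but never say how to estimate it. This is the one real gap in the sketch. It can be closed by differentiating the Gauss identity $h_{nn}h_{ll} = R_{nlnl}-\bar R_{nlnl}$ along $e_n$ to obtain $|\nabla_n h_{ll}|\leq C(1+\alpha)/h_{nn}$ for $l\neq n$, hence $|\nabla_n H|\leq C\alpha h_{nn}$ --- precisely the device the paper deploys in~(\ref{n1}) for a slightly different purpose. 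Once this is supplied, your Codazzi absorption of $\sigma_2^{ii}(\nabla_i h_{nn})^2/h_{nn}$ into $2\sum_{i<n}(\nabla_n h_{in})^2$ goes through (the $i=n$ piece is harmless since $\sigma_2^{nn}=\sigma_1-h_{nn}=O(1/h_{nn})$), and the estimate closes with $h_{nn}(p_0)\lesssim \alpha$, \emph{not} an $A_1$ independent of $\alpha$ as you assert --- though the exponential bound~(\ref{estH-1}) still follows after unwinding the test function.

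Two smaller remarks. First, $\kappa_{\max}$ is not smooth where eigenvalues coincide, so you need the standard device of replacing it near $p_0$ by $h_{nn}$ in a frame extended from $p_0$, or a perturbation argument; this is routine but should be said. Second, your separate treatment of the most negative eigenvalue via $-h$ is unnecessary: once $|H|$ (equivalently $\kappa_{\max}$, in view of $\sigma_1\ge 0$ and Lemma~\ref{lemm 3.1}) is bounded, the identity $\sum_i\kappa_i^2 = H^2-2\sigma_2$ with $\sigma_2=f$ bounded finishes the job, as in the paper's final line.
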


\begin{proof}
Denote by $\kappa(x)=(\kappa_1(x), \cdots, \kappa_n(x))$ the principal curvatures of $x\in M$. 
Set,
\begin{equation*}
\varphi=log|H|+\alpha \frac{\Phi}{m},
\end{equation*}
where $H=\sigma_1(h)$ is the mean curvature, $m=\inf_{x\in M}\phi^\prime(X(x))$ and $\alpha$ is a positive constant to be determined later. Suppose $\varphi$ attains maximum at $x_0$. Without loss of generality, we may assume $|H|(x_0)\geq 1$, otherwise there's nothing to prove. With a suitable choice of local orthonormal frame
$(e_1,\cdots,e_n)$, we may also assume $H(x_0)\geq 1$ and $h_{ij}(x_0)$ is diagonal so that $\kappa_i=h_{ii}$.

In the rest of proof, all computations will be carried out at $x_0$.
\begin{equation}\label{3.2}
\varphi_i=\frac{\sum_l h_{lli}}{H}+\alpha \frac{\Phi_i}{m}=0,
\end{equation}
\begin{eqnarray}\label{3.3}
\varphi_{ii}=\frac{\sum_l h_{llii}}{H}-\frac{(\sum_lh_{lli})^2}{H^2}+\alpha \frac{\Phi_{ii}}{m}.
\end{eqnarray}
By commutator formula (\ref{comm}),
\begin{equation}\label{3.4}
h_{llii}=h_{iill}-h_{ii}^2h_{ll}+h_{ll}^2h_{ii}+h_{ll}\bar{R}_{ilil}+h_{ii}\bar{R}_{illi}+\nabla_l\bar{R}_{iil\nu}+\nabla_i\bar{R}_{ill\nu}.
\end{equation}

Put (\ref{3.4}) into (\ref{3.3}), at $x_0$,
\begin{eqnarray}\label{3.7}
\sigma_2^{ii}\varphi_{ii}&=& \sum_l\frac{ \sigma_2^{ii}\left(h_{iill}-h_{ii}^2h_{ll}+h_{ll} \bar{R}_{ilil}+h_{ii}\bar{R}_{illi}
 +\nabla_l\bar{R}_{iil\nu}+\nabla_i\bar{R}_{ill\nu}\right)}{H} \nonumber \\
 &&
+\frac{2fh_{ll}^2}{H}-\frac{\sigma_2^{ii}(\sum_l h_{lli})^2}{H^2}+\alpha \frac{\sigma_2^{ii}\Phi_{ii}}{m}.
\end{eqnarray}

Differentiate equation (\ref{f-scalar}) in direction of $e_k$,
\begin{eqnarray*}
\sigma_2^{ii}h_{iik}=f_k, \quad
\sigma_2^{ii}h_{iikk}+\sigma_2^{pq,rs}h_{pqk}h_{rsk}=f_{kk}.
\end{eqnarray*}
Put above identities into inequality (\ref{3.7}),
\begin{eqnarray*}
\sigma_2^{ii}\varphi_{ii}&=&\sum_l\frac{ \sigma_2^{ii}\left(h_{ll}\bar{R}_{ilil}+h_{ii}\bar{R}_{illi}
+\nabla_l\bar{R}_{iil\nu}+\nabla_i\bar{R}_{ill\nu}\right)-\sigma_2^{pq,rs}h_{pql}h_{rsl}}{H}\\
&&-\sigma_2^{ii}h_{ii}^2-\frac{\sigma_2^{ii}(\sum_l h_{lli})^2}{H^2}+\alpha \frac{\sigma_2^{ii}\Phi_{ii}}{m}+\frac{2fh_{ll}^2}{H}+ \frac{f_{ll}}{H}.
\end{eqnarray*}
By (\ref{Curv-deri}),  \[|\nabla_l\bar{R}_{iil\nu}+\nabla_i\bar{R}_{ill\nu} |\le CH.\] As $0\le \sigma_2^{ii} \le CH$, at the maximum point $x_0$,
\begin{align}\label{ineqq}
0\geq \frac{\sum_l\left( f_{ll}-\sigma_2^{pq,rs}h_{pql}h_{rsl}\right)}{H}-\sigma_2^{ii}h_{ii}^2-\frac{\sigma_2^{ii}(\sum_l h_{lli})^2}{H^2}+\alpha \frac{\sigma_2^{ii}\Phi_{ii}}{m}-CH,
\end{align}
where $C$ is a constant depending on $n$, $\|g\|_{C^4}$, $\|\bar{g}\|_{C^4}$. In what follows, we denote $C$ as a constant, which might change from line to line, but it always stands for a constant under control.

By Lemma \ref{lemma1},
\begin{equation}\label{3.5}
\Phi_{ii}=\phi^\prime(\rho)-h_{ii}u.
\end{equation}
Put this to (\ref{ineqq}),
\begin{eqnarray}\label{ineq}
0&\geq & \frac{1}{H}\left(\sum_l\left( f_{ll}-\sigma_2^{pq,rs}h_{pql}h_{rsl}\right)\right)-\sigma_2^{ii}h_{ii}^2-\frac{\sigma_2^{ii}(\sum_l h_{lli})^2}{H^2}\nonumber \\
& &+(\alpha-C)H-C\frac{\alpha\phi}{m},
\end{eqnarray}
where $C$ is a constant depending on $n$, $\|g\|_{C^4}$ and $\|\bar{g}\|_{C^4}$.

By Lemma \ref{lemma5} and (\ref{3.2}) and the assumption $H\geq 1$,
\begin{eqnarray}\label{3.8}
\quad \quad \quad 0\geq (\alpha-C)H -\frac{\sum_l\sigma_2^{pq,rs}h_{pql}h_{rsl}}{H}-\sigma_2^{ii}h_{ii}^2
-\frac{\sigma_2^{ii}(\sum_l h_{lli})^2}{H^2}
-C\frac{\alpha\phi}{m}.
\end{eqnarray}

Note that
\begin{align}\label{3.9}
-\sigma_2^{pq,rs}h_{pql}h_{rsl}=-\sum_{p\neq q}(h_{ppl}h_{qql}-h_{pql}^2),
\end{align}
It follows from Lemma \ref{lemma2} that,
\begin{equation*}
-\sum_{p\neq q}h_{ppl}h_{qql}\geq \min\{-2\frac{(\sigma_2)_l(\sigma_1)_l}{\sigma_1},0\}
\end{equation*}
Together with critical condition (\ref{3.2}) and the defintion of $\sigma_2$, we have
\begin{align}\label{3.10}
-\sum_{p\neq q}h_{ppl}h_{qql}\geq -C\frac{\alpha\phi}{m}H
\end{align}

Put (\ref{3.9}) and (\ref{3.10}) into (\ref{3.8})
\begin{eqnarray*}
0\ge  \frac{\sum_{p\neq q}h_{pql}^2}{H}-\sigma_2^{ii}h_{ii}^2-\frac{\sigma_2^{ii}(\sum_l h_{lli})^2}{H^2}+(\alpha-C)H-C\frac{\alpha\phi}{m}.
\end{eqnarray*}

Note that
\begin{equation*}
\sigma_2^{ii}h_{ii}^2=\sum_{i=1}^{n}h_{ii}(\sum_{j\neq i}h_{jj}h_{ii}),
\end{equation*}
and $\sum_{j\neq i}h_{jj}h_{ii}$ is bounded by Gauss equation (\ref{Gauss}). Thus,
\begin{equation*}
\sigma_2^{ii}h_{ii}^2\leq C\sum_{i=1}^{n}|h_{ii}|\leq CnH.
\end{equation*}
Therefore,
\begin{eqnarray}\label{3.12}
0\ge \frac{1}{H}\sum_{p\neq q}h_{pql}^2-\frac{\sigma_2^{ii}(\sum_l h_{lli})^2}{H^2}+(\alpha-C)H-C\frac{\alpha\phi}{m}.
\end{eqnarray}

Now we state a lemma, which will be used frequently in the rest of the paper (the same trick was also used in \cite{GLX}).
\begin{lemm}\label{lemm 3.1}
Suppose the second fundamental form $(h_{ij})$ is diagonalized at $x_0$, assume $h_{11}\geq h_{22}\cdots\geq h_{nn}$ and $\sigma_1\geq 0$, then either $H\leq 1$ or $|h_{ii}|\leq \frac{C}{h_{11}}$ for $i\neq 1$, where $C$ is a constant depending only on $\|g\|_{C^2},\|\bar{g}\|_{C^2}$.
\end{lemm}
\begin{proof}
Suppose that $H> 1$, then $h_{11}\geq \frac{H}{n}\geq \frac{1}{n}$. By Gauss equation (\ref{Gauss}),  $|h_{11}h_{ii}|=|R_{1i1i}-\bar{R}_{1i1i}|\leq C$, we deduce that $|h_{ii}|\leq \frac{C}{h_{11}}$.
\end{proof}

By lemma \ref{lemm 3.1}, $\sigma_2^{11}\leq \frac{C}{H}$.
Critical equation (\ref{3.2}) yields
\begin{equation*}
\varphi_i=\frac{H_i}{H}+\alpha\frac{\Phi_i}{m}=0.
\end{equation*}
We have
\begin{equation*}
\sigma_2^{11}\left(\frac{H_1}{H}\right)^2\leq C\frac{\alpha^2\phi^2 }{Hm^2}.
\end{equation*}
By Codazzi equation (\ref{Codazzi}),
\begin{eqnarray}\label{3.13}
0 &\ge & \frac{\sum_{p\neq q}h_{pql}^2}{H}-\sum_{i\neq 1}\frac{\sigma_2^{ii}(\sum_l h_{lli})^2}{H^2}\nonumber \\
& &+(\alpha-C)H-C\frac{\alpha\phi}{m}-C\frac{\alpha^2\phi^2 }{Hm^2} \\
&\ge & \frac{\sum_{l\neq i}2h_{lli}^2}{H}-\sum_{i\neq 1}\frac{\sigma_2^{ii}\left(\sum_l h_{lli}^2+\sum_{p\neq q} h_{ppi}h_{qqi}\right)}{H^2} \nonumber \\
&& +(\alpha-C)H-C\frac{\alpha\phi}{m}-C\frac{\alpha^2\phi^2 }{Hm^2} . \nonumber
\end{eqnarray}
It follows from (\ref{3.10}) that,
\begin{align}\label{3.14}
-\sum_{i\neq 1}\frac{\sigma_2^{ii}\left(\sum_{p\neq q} h_{ppi}h_{qqi}\right)}{H^2}\geq -C\sum_{i\neq 1}\sigma_2^{ii}\frac{\alpha\phi}{mH}\geq -C\frac{\alpha\phi}{m}.
\end{align}

Insert (\ref{3.14}) into (\ref{3.13}),
\begin{align*}
0 &\ge \frac{1}{H}\sum_{l\neq i}2h_{lli}^2-\sum_{i\neq 1}\frac{\sigma_2^{ii} h_{lli}^2}{H^2}+(\alpha-C)H-C\frac{\alpha\phi}{m}-C\frac{\alpha^2\phi^2 }{Hm^2} .
\end{align*}

By Lemma \ref{lemm 3.1}, $\sigma_2^{ii}\leq H+\frac{C}{H}$ for $i\neq 1$, we have
\begin{eqnarray*}
0&\ge& \frac{1}{H}\sum_{l\neq i}(1-\frac{C}{H^2})h_{lli}^2-\sum_{i\neq 1}\frac{(1+\frac{C}{H^2}) h_{iii}^2}{H}\\
& &+(\alpha-C)H-C\frac{\alpha\phi}{m}-C\frac{\alpha^2\phi^2 }{Hm^2} .
\end{eqnarray*}

We deal with $\frac{h^2_{iii}}{H}$. Again by Gauss equation (\ref{Gauss}),
\begin{equation*}
h_{11i}h_{ii}+h_{11}h_{iii}=R_{1i1i,i}-\bar{R}_{1i1i,i}
\end{equation*}
Thus $\forall i\neq 1$,
\begin{equation}\label{n1}
\frac{h_{iii}^2}{H}\leq \frac{2h_{ii}^2h_{11i}^2+C}{Hh_{11}^2}\leq C\frac{h_{11i}^2}{H^5}+\frac{C}{H^3}
\end{equation}
In turn,
\begin{equation} \label{3.15}
0\ge \frac{1}{H}\sum_{l\neq i}(1-\frac{C}{H^2})h_{lli}^2+(\alpha-C)H-C\frac{\alpha\phi}{m}-C\frac{\alpha^2\phi^2 }{Hm^2} .
\end{equation}

Choose $\alpha$ big enough, we have $H\leq \frac{C \phi}{m}$ at the maximum point of $\varphi$. Since $\Phi(x_0)-\min\Phi\geq C\phi(\tilde{\rho})$, we have $H\leq Ce^{\frac{C}{m} \left(\Phi(x_0)-\min\Phi\right)}$.

As \[\sum_i \kappa_i^2=H^2(\kappa)-2\sigma_2(\kappa),\] we obtain a bound on the principal curvatures. The proof of Theorem \ref{thm1-1} is complete. \end{proof}

\medskip

We remark that Theorem \ref{thm1-1} does not make the assumption that $(h_{ij})\in \bar \Gamma_2$. That is, it also allows $(-h_{ij})\in \bar \Gamma_2$ at some points. The only assumption is the nonnegativity of the extrinsic scalar curvature $\sigma_2(h_{ij})$. One observes that, nonnegativity assumption of $\sigma_2(h_{ij}(x))$ and the Newton-MacLaurin inequality imply either  $(h_{ij}(x))\in \bar \Gamma_2$ or
$(-h_{ij})(x)\in \bar \Gamma_2$. 

\bigskip

We now prove theorem \ref{thm2}.

\begin{proof}  Note that the only place where property (\ref{3.5}) of the warp potential $\Phi$ is used in the proof of Theorem \ref{thm1} is to obtain inequality (\ref{ineq}). For this purpose, existence of a function $\Phi$ satisfying inequality (\ref{condPhi}) would suffice. For any manifold with a function $\Phi$ satisfying (\ref{condPhi}), the same proof of Theorem \ref{thm1} will carry through to obtain the curvature bound.
Therefore, we only need to verify that for each manifold listed in Theorem \ref{thm2}, there is a globally defined function $\Phi$ on $M$ such that (\ref{condPhi}) is satisfied.

\begin{enumerate} \item  $(N,\bar g)$ is a complete non-compact non-negatively curved manifold, by \cite{GW2}, there is a strictly global convex function $\Phi$ in $N$. Since $M$ is compact, we may find an open set $U\subset N$ such that $M\subset U$ and $\bar U$ is compact. Therefore, there is $C>0$,
\[\Phi_{\alpha \beta }(X)\ge C\bar g_{\alpha\beta}(X), \quad \forall X\in U, \]
Restricting $\bar \nabla^2\Phi$ to $M$,  \begin{eqnarray*}\Phi_{ij}&\ge& Cg_{ij}-h_{ij}\langle \nabla \Phi,\nu\rangle \\
&\ge & Cg_{ij}-C_2h_{ij},\end{eqnarray*} for some positive constant $C_2>0$.

\item  $K(N)\leq k$, for some $k\ge 0$, $M\subset B_{R}(p)$ with $ R\le \min\{inj(p),\frac{\pi}{2\sqrt{k}}\}$.

Set $\rho(x)=d(p, x)$ and $\Phi(x)=\rho^2(x)$.
Let's recall the definition of the segment domain
\begin{eqnarray*} seg(p)=\{v\in T_pN |  \exp_p(tv): [0,1] \to N \mbox{is a segment} \}.\end{eqnarray*}
Note that $B_R\subset B_{inj(p)}(p)\subset seg^0(p)$, where $seg^0(p)$ is the interior of $seg(p)$. $seg^0(p)$ is a starshaped open domain where $exp_p$ is injective, non-singular and $\Phi$ is smooth.

\medskip

\noindent
We divide it into two cases: $k=0$ and $k>0$.

If $k=0$,   for any $e$ unit tangent vector field orthogonal to $\frac{\partial}{\partial \rho}$ in $TN$,
by Hessian comparison \cite{SY}, compared with $\mathbb{R}^{n+1}$,
\begin{align*}
\rho_{e e}\geq |X|_{\tilde{e}\tilde{e}},
\end{align*}
where $\tilde{e}$ is the corresponding unit tangent vector in $\mathbb{R}^{n+1}$ orthogonal to $\frac{\partial}{\partial r}$.
As $Hess (\rho) (e, \frac{\partial}{\partial \rho})=0$, we have
\begin{align*}
(\Phi_{\alpha\beta}) \geq C_0(\bar g_{\alpha\beta}).
\end{align*}

If $k>0$, by a dilation of the metric, we only need to consider the case $K(N)\leq 1$, $R\le \min\{inj(p), \frac{\pi}{2}\}$. Since $M$ is compact, $M\subset B_{\tilde R}(p)$ for some $\tilde R<R\le \frac{\pi}{2}$.
Again by Hessian comparison,  for any $e$ unit tangent vector field orthogonal to $\frac{\partial}{\partial \rho}$ in $TN$
\begin{align*}
\rho_{e e}\geq r_{\tilde{e}\tilde{e}},
\end{align*}
where $r$ is the distance function on $\mathbb{S}^{n+1}$. If we denote the metric on $\mathbb{S}^{n+1}$ as
\begin{align*}
ds^2=dr^2+\sin^2(r)g_r,
\end{align*}
where $g_r$ the standard round metric on $\mathbb S^n$. We have
\begin{align*}
r_{\tilde{\alpha}\tilde{\beta}}=Hess (r)(e_{\tilde{\alpha}},e_{\tilde{\beta}})=\frac{2\cos(r)}{\sin(r)}g_r(e_{\tilde{\alpha}},e_{\tilde{\beta}}).
\end{align*}
thus
\begin{align*}
\Phi_{\alpha\beta}\geq 2\rho_\alpha\rho_\beta+\frac{4\rho\cos(\rho)}{\sin(\rho)}g_r(e_{\tilde{\alpha}},e_{\tilde{\beta}}).
\end{align*}
Since $\rho\le \tilde R<\frac{\pi}{2}$,
\begin{align*}
\Phi_{\alpha\beta}(x) \geq C_0 \bar g_{\alpha\beta}(x), \quad x\in B_{\tilde R}(p),
\end{align*}
for some constant $C_0>0$.

In both cases, restricting $\bar \nabla^2 \Phi$ to $M$, we have
\begin{align*}
\Phi_{ij}\geq C_0g_{ij}-C_2h_{ij}.
\end{align*}
with some constant $C_2>0$.

\end{enumerate}
\end{proof}

If $N$ is an even-dimensional, compact,
simply connected, with positive sectional curvature $0<K_N\le k$, the Klingenberg's injectivity radius estimate \cite{K} states that $inj(p)\ge \frac{\pi}{\sqrt{k}}, \forall p\in N$. In this case, $\min\{inj(p), \frac{\pi}{2\sqrt{k}}\}=\frac{\pi}{2\sqrt{k}}$.

\begin{rem} If $N$ is a Hadamard space, condition (2) in Theorem \ref{thm2} is automatically satisfied for any $p\in N$ and $r>0$.  When $n=2$ and under the assumption that there is $\delta>0$ such that \begin{eqnarray*}
R(x) \ge \delta+\bar{R}(X(x))-2\inf\{\bar{Ric}_{X(x)}(\mu,\mu):\mu \in T_{X(x)}U,|\mu|=1\}, \forall x\in \mathbb S^2,\end{eqnarray*} estimate (\ref{estH}) was proved by Pogorelov (Theorem in page 401, \cite{Pb}) for embedded strictly convex surfaces $(\mathbb S^2, g)$ in $3$-dimensional Hadamard space, where the constant $C$ in (\ref{estH}) in addition depends on $\delta$ and inner radius of the domain enclosed by the embedded surface. A lower bound of inner radius of convex surfaces in $\mathbb R^3$ with extrinsic Gauss curvature bounded between two positive constants $K_0, K_1$ follows from a classical result of Blaschke. Such lower bound for inner radius was claimed in \cite{Pb} for bounded convex surfaces in general $3$-dimensional Hadamard space.
\end{rem}

\medskip

Estimate (\ref{estH}) in Theorem \ref{thm1} depends on the position of the embedding. It is desirable to replace it by intrinsic diameter of $M$. In the case of space forms, one may achieve this by shifting the origin. From the proof of Theorem \ref{thm2}, we may replace it by distance function in some cases listed there. We state it as a corollary.

\begin{coro} \label{cor-3} If $X: (M^n,g)\to B_{R}(p)\subset  (N, \bar g)$ is a $C^4$ immersed compact oriented hypersurface with nonnegative extrinsic scalar curvature.  Suppose for some $k\ge 0$, $K_N(X)\leq k, \forall x\in B_{R}(p)$ and $R\le \min\{inj(p), \frac{\pi}{2\sqrt{k}} \}$,  then there's a constant $C$ depending only on $n$, $\|g\|_{C^4}$, $\|\bar{g}\|_{C^4}$, $\frac{\pi}{2\sqrt{k}}-\sup_{x\in M}dist(x, p)$ and $dist(M, \partial B_{R}(p))$ such that
\begin{equation}\label{estH-2}
\max_{X\in M,i=1,...,n}|\kappa_i(X)|\leq C.
\end{equation}
\end{coro}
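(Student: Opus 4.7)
The plan is to reduce Corollary \ref{cor-3} to Theorem \ref{thm2} by exhibiting, on an open neighborhood of $M$ in $N$, an explicit auxiliary function $\Phi$ satisfying condition (\ref{condPhi}) with constants controlled solely by the listed geometric quantities. Mimicking Case (2) of the proof of Theorem \ref{thm2}, I would take
\[\Phi(y) = d(p,y)^2, \qquad y \in N.\]
Since $R \le inj(p)$, the distance function $d(p,\cdot)$ is smooth on $B_R(p)\setminus\{p\}$ and its square is smooth across $p$, so $\Phi\in C^\infty(B_R(p))$. The hypothesis $dist(M,\partial B_R(p))>0$ then guarantees that $M$ sits strictly inside $B_R(p)$, so $\Phi\in C^\infty$ on an open neighborhood of $M$, as required for the application of Theorem \ref{thm2}.

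The next step is to establish the Hessian inequality (\ref{condPhi}) with quantitatively controlled constants. Set $\tilde R := \sup_{x\in M} d(p,X(x))$, so that $\tilde R < \pi/(2\sqrt{k})$ when $k>0$ and $\tilde R \le R$ when $k=0$. Running the Hessian-comparison computation verbatim from Case (2) of Theorem \ref{thm2} (against the model of constant curvature $k$, with a dilation to reduce to $k=1$ in the positive case) gives
\[\bar{\nabla}^2 \Phi(y) \ge C_0\,\bar g(y), \qquad y \in \overline{B_{\tilde R}(p)},\]
with $C_0>0$ arising from a positive lower bound on a quantity comparable to $2\cos(\sqrt{k}\,\tilde R)/(\sqrt{k}\,\sin(\sqrt{k}\,\tilde R))$ when $k>0$, and from a purely dimensional Euclidean constant when $k=0$. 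Thus $C_0$ depends only on $k$ and on $\pi/(2\sqrt{k}) - \tilde R$. Restricting $\bar{\nabla}^2\Phi$ to $M$ via the Gauss formula and using the elementary bound $|\langle \bar{\nabla}\Phi,\nu\rangle| = 2\rho |\langle \bar{\nabla}\rho,\nu\rangle|\le 2\tilde R$ yields
\[\Phi_{ij} \ge C_0\, g_{ij} - C_2\, h_{ij}\]
on $M$ with $C_2 \le 2\tilde R$, which is precisely condition (\ref{condPhi}).

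Finally, $\sup_M \Phi \le \tilde R^2$ is itself controlled by $R\le \pi/(2\sqrt{k})$ when $k>0$ (and by $R$ alone when $k=0$), and both constants $C_1=C_0$ and $C_2$ in (\ref{condPhi}) are controlled as above. Theorem \ref{thm2} therefore applies and delivers (\ref{estH-2}) with $C$ depending only on $n$, $\|g\|_{C^4}$, $\|\bar g\|_{C^4}$, and the two quantities $\pi/(2\sqrt{k})-\sup_{x\in M} d(p,X(x))$ and $dist(M,\partial B_R(p))$. The only real point requiring attention, rather than an obstacle, is careful bookkeeping: one must verify that the constant $C_0$ from Hessian comparison degenerates only as $\tilde R \to \pi/(2\sqrt{k})$, and that smoothness of $\Phi$ on a neighborhood of $M$ persists exactly as long as $dist(M,\partial B_R(p))>0$. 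These are precisely the two quantities appearing in the statement, which is why no other geometric parameter enters.
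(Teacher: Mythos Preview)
Your proposal is correct and follows essentially the same approach as the paper: the paper's own proof of Corollary~\ref{cor-3} is a two-sentence remark that in the proof of Theorem~\ref{thm2} one already used $\Phi=\rho^2$ with $\rho$ the ambient distance function, so the corollary follows directly. Your write-up simply fills in the bookkeeping the paper omits, tracking that the Hessian-comparison constant $C_0$ degenerates only as $\tilde R\to \pi/(2\sqrt{k})$ and that smoothness of $\Phi$ near $M$ requires $M\subset\subset B_R(p)$, which is exactly why those two quantities appear in the statement.
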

\begin{proof} In the proof of Theorem 2, we used $\Phi=\rho^2$ in cases (2) and (3), where $\rho$ is the distance function of the ambient space $N$.  Then the Corollary follows directly from the proof of Theorem \ref{thm2}. \end{proof}

\medskip

Condition (\ref{condPhi}) in Theorem \ref{thm2} may not be valid in general. For example,
let $M=\mathbb S^n\subset \mathbb S^{n+1}$. Since $h_{ij}=0$ for $M$, if there is $\Phi$ satisfying (\ref{condPhi}) in a neighborhood of $\mathbb S^n$, one would have
\[\Delta \Phi(x)\ge C>0, \forall x\in \mathbb S^n.\]
This is impossible. On the other hand, one may obtain a direct mean curvature estimate for $(M,g)$ when sectional curvature of $(N,\bar g)$  is positive and sufficiently pinched. In particular, if $(N,\bar g)$ is an Einstein manifold with positive sectional curvature.

Assume that $\max |H|$ is attained at some point $x_0$. With a suitable choice of local orthonormal frame $(e_1,\cdots,e_n)$, we may assume that $H(x_0)\geq 1$ and $h_{ij}(x_0)$ is diagonal. At $x_0$,
\begin{align}\label{a}
0&\geq \sigma_2^{ii} H_{ii}=\sum_{l}\sigma_2^{ii} h_{llii}\\\nonumber
&=\sum_{l}\sigma_2^{ii}\left( h_{iill}-h_{ii}^2h_{ll}+h_{ll}^2h_{ii}+h_{ll}\bar{R}_{ilil}+h_{ii}\bar{R}_{illi}+\nabla_l\bar{R}_{iil\nu}+\nabla_i\bar{R}_{ill\nu}\right)\\\nonumber
&=\sum_{l}\left( f_{ll}-\sigma_2^{pq,rs}h_{pql}h_{rsl}\right)-\sigma_1\sigma_2^{ii}h_{ii}^2+2\sigma_2\sum_{l}h_{ll}^2\\\nonumber
&+\sum_{l}\sigma_2^{ii}\left( h_{ll}\bar{R}_{ilil}-h_{ii}\bar{R}_{ilil}+\nabla_l\bar{R}_{iil\nu}+\nabla_i\bar{R}_{ill\nu}\right).
\end{align}
As $\sigma_2^{ii}h_{ii}^2=\sigma_1\sigma_2-3\sigma_3$,
\begin{eqnarray}\label{b1}
0&\ge & \sum_{l}\left( f_{ll}-\sigma_2^{pq,rs}h_{pql}h_{rsl}\right)+3\sigma_1\sigma_3-4\sigma_2^2+\sigma_1^2\sigma_2\\
&& +\sum_{l}\sigma_2^{ii}\left(h_{ll}\bar{R}_{ilil}-h_{ii}\bar{R}_{ilil}+\nabla_l\bar{R}_{iil\nu}
+\nabla_i\bar{R}_{ill\nu}\right).\nonumber
\end{eqnarray}

By lemma \ref{lemm 3.1},
\begin{align*}
|\sigma_3|=|h_{11}\sum_{i,j\neq 1}h_{ii}h_{jj}+\sum_{i,j,k\neq 1}h_{ii}h_{jj}h_{kk}|\leq Ch_{11}^{-1}+Ch_{11}^{-3},
\end{align*}
thus
\begin{align}\label{b2}
\sigma_1\sigma_3\geq -C-Ch_{11}^{-2}\geq -C.
\end{align}
Insert (\ref{b2}) into (\ref{b1}), by the boundedness of $\sigma_2$,
\begin{align}\label{c}
0&\geq \sum_{l}\left( f_{ll}-\sigma_2^{pq,rs}h_{pql}h_{rsl}+\sigma_2^{ii}[(h_{ll}-h_{ii})\bar{R}_{ilil}+\nabla_l\bar{R}_{iil\nu}+\nabla_i\bar{R}_{ill\nu}]\right)-C.
\end{align}
We now work out terms in (\ref{c}). By (\ref{Curv-deri}),
\begin{align}
\nabla_l\bar{R}_{iil\nu}+\nabla_i\bar{R}_{ill\nu}=&\bar{\nabla}_l\bar{R}_{iil\nu}-h_{li}\bar{R}_{\nu il\nu}-h_{li}\bar{R}_{i\nu l\nu}+h_{lm}\bar{R}_{i ilm}\\\nonumber
& +\bar{\nabla}_i\bar{R}_{ill\nu}-h_{ii}\bar{R}_{\nu ll\nu}-h_{il}\bar{R}_{i\nu l\nu}+h_{im}\bar{R}_{illm}\\\nonumber
=&\bar{\nabla}_l\bar{R}_{iil\nu}+\bar{\nabla}_i\bar{R}_{ill\nu}+h_{ii}\bar{R}_{illi}+h_{ii}\bar{R}_{l\nu l\nu} (1-\delta_{il}).
\end{align}
Since $\bar{\nabla}_l\bar{R}_{iil\nu}$ and $\bar{\nabla}_i\bar{R}_{ill\nu}$ are bounded, and by lemma \ref{lemm 3.1}
\[|\sigma_2^{ii}h_{ii}|\le C, \forall i=1,\cdots, n.\]
By (\ref{formula1}) and the fact $\nabla H=0$ at the point,
\begin{align*}
 \sum_i f_{ii}\geq   \sum_i\left(-2h_{ii}^2\bar{Ric}(\nu,\nu)+2h_{ii}^2\bar{Ric}(i,i)-C|h_{ii}|-C\right).
\end{align*}
Again, as $\nabla H=0$,
\begin{eqnarray*}
-\sum_{l}\sigma_2^{pq,rs}h_{pql}h_{rsl}&=&\sum_{l,p\neq q} \left(-h_{ppl}h_{qql}+h_{pql}^2\right)\\
&=& -|\nabla H|^2+\sum_{l,p, q}h_{pql}^2\geq 0.
\end{eqnarray*}
As $H\geq 1$, put above to (\ref{c}),
\begin{align}\label{e}
0 \geq  \sum_{i} \left(2h_{ii}^2\bar{Ric}(i,i)-2h_{ii}^2\bar{Ric}(\nu,\nu)\right)+\sum_{l,i}\sigma_2^{ii} h_{ll}\bar{R}_{ilil}+\sigma_1^2\sigma_2-CH.
\end{align}

Again by lemma \ref{lemm 3.1},
\begin{align}\label{f}
\sum_{i} \left(h_{ii}^2\bar{Ric}(i,i)-h_{ii}^2\bar{Ric}(\nu,\nu)\right)\geq -h_{11}^2\left(\bar{Ric}(\nu,\nu)-\bar{Ric}(1,1)\right)-C.
\end{align}
Use the fact that $\sigma_2^{ii}=h_{11}+C$ for $i\ge 2$,
\begin{align}\label{g}
\sum_{l,i}\sigma_2^{ii}h_{ll}\bar{R}_{ilil}&=\sum\sigma_2^{ii}h_{11}\bar{R}_{i1i1}+\sum_{l\neq 1}\sigma_2^{ii}h_{ll}\bar{R}_{ilil}\\\nonumber
&\geq h_{11}^2\sum_{i=2}^n\bar{R}_{1i1i}-CH.
\end{align}

Insert (\ref{f}) and (\ref{g})  into (\ref{e}), by Lemma \ref{lemm 3.1},
\begin{eqnarray}\label{pinch}
0\geq  2h_{11}^2\left(\bar{Ric}(1,1)-\bar{Ric}(\nu,\nu)\right)+h_{11}^2\sum_{i=2}^n \bar{R}_{1i1i}+\sigma_1^2\sigma_2-CH.
\end{eqnarray}
Inequality (\ref{pinch}) yields an a priori estimate for $|H|$ in the following cases: (i), $(N^{n+1}, \bar g)$ is Einstein with positive sectional curvature; (ii), if $(N^{n+1}, \bar g)$ is positively curved and its sectional curvature is sufficiently pinched pointwise.

Note that $\sigma_2=\frac{R-\bar R}{2} +\bar Ric(\nu, \nu)$, (\ref{pinch}) becomes
\begin{eqnarray*}
0\geq  2h_{11}^2[\bar{Ric}(1,1)-\bar{Ric}(\nu,\nu)+\frac{1}{2}\sum_{i=2}^n\bar{R}_{1i1i}+\frac{R-\bar R}{4} +\frac{\bar Ric(\nu, \nu)}2]-CH.
\end{eqnarray*}
From this inequality,  one may also impose a direct condition on scalar curvature $R$ of $(M,g)$ to get estimate of $|H|$.

\section{Isometric embeddings}

Existence of isometric embedding of $(\mathbb{S}^2,g)$ in ambient space $(N^3,\bar{g})$ is to solve the following equation system,
\begin{align*}
(d X, d X)_{\bar g}=g,
\end{align*}
where $(\mathbb{S}^2,g)$ is a given manifold with metric $g$, and the inner product is the metric $\bar{g}$.
A standard way to solve the problem is the method of continuity. One connects $(\mathbb{S}^2,g)$ to some $(\mathbb{S}^2,g_0)$
which can be embedded in $(N^3,\bar g)$ through a homotopy path $(\mathbb{S}^2,g_t), 0\le t\le 1$. The equation to solve is
\begin{align}\label{sys1}
(d X_t, d X_t)_{\bar g}=g_t, \forall 0\le t\le 1.
\end{align}
For the openness, one needs to consider the linearized problem:
\begin{align}\label{linearized equation}
(d X_t, D\tau_t)_{\bar g} =q_t, \forall 0\le t\le 1,
\end{align}
for any two symmetric tensor $q_t$, where $\tau_t$ is the variation of $X_t$ and $D$ is the Levi-Civita connection of $(N,\bar{g})$. The openness of the problem has been established in a recent work of Li-Wang \cite{GLW} if system (\ref{sys1}) is elliptic. 

To ensure the ellipticity of system (\ref{sys1}) of the homotopy path, one may impose the following condition 
\begin{eqnarray}\label{e-c}
\min_{x\in \mathbb{S}^2} R(x)> \max_{X\in N, \mu \in ST_{X}N} \{\bar{R}(X)-2\bar{Ric}_{X}(\mu,\mu)\} . 
 .\end{eqnarray} 
We also refer Pogorelov's work \cite{Pb}. For the closedness, Theorem \ref{thm1} and Theorem \ref{thm2} would be suffice if we can establish $C^0$ bound.  If the ambient space if a space form, one can always translate the surface so that the origin is inside $\Sigma$. Therefore $C^0$ is automatic. Also, if the ambient space satisfies condition (2) in Theorem \ref{thm2}, a $C^0$ bound follows from the bound of diameter of the intrinsic metic. This is in particular true for Hadamard space. 
For general ambient space, if $L^2$ norm of solutions $\tau_t$ to equation (\ref{linearized equation}) is under control, then $C^0$ estimate would follow as
\[X(x)=X_0(x)+\int_{0}^{1} \tau_t(x) dt, \forall x\in \mathbb S^2.\]
It is proved in \cite{GLW} that kernel $\mathcal{N}_t$ of
\[(d X_t, D\tau_t)_{\bar g}=0,\]
is of dimension $6$, and
\[\|\tau_t\|_{L^2} \le C, \forall \tau_t \perp \mathcal{N}_t,\]
where $C$ depends on ellipticity constants.
Note that the estimate in Theorem \ref{thm1}  depends on the position of the embedded surface $\Sigma$ in $N$. In a recent work by the second author \cite{Lu}, following Heinz's argument in \cite{H}, a uniform bound for the mean curvature was obtained in the non-degenerate case for embedded surface $(\mathbb S^2, g)$ in $(N^3, \bar g)$. In view of the discussion above, that completes the closedness part of the method of continuity.
In the case that $N$ has no horizon, combining results in \cite{GLW, Lu} and using the normalized Ricci flow as a homotopy path as in \cite{WL}, the existence the isometric embedding of $(\mathbb{S}^2, g)$ to $(N, \bar g)$ can be proved under condition (\ref{e-c}). 

\medskip

More interesting case is that $N$ has a horizon and $\bar g$ is given by
\begin{equation}\label{warp20} \bar{g}=dr^2+\phi^2(r)d\sigma_{\mathbb S^2}^2, \end{equation} where $\phi(r)$ is defined for $r\ge r_0> 0$ and $\phi^{'}(r_0)=0$. Set $s=\phi(r)$, metric $\bar g$ can be rewritten as
\begin{equation}\label{warps0} \bar{g}=\frac{ds^2}{\eta^2(s)}+s^2d\sigma_{\mathbb S^2}^2, \end{equation} where $\eta(s)=\phi^{'}(r(s))$.

Two important examples are the Anti-de Sitter-Schwarzschild manifolds and  the Reissner-Nordstr\"om manifolds in general relativity. A manifold is called static (sub-static), if there exists a function $\eta$ satisfying the static (sub-static) equation
\[(\Delta_{\bar g} \eta) \bar g-\nabla_{\bar g}^2 \eta+\eta Ric_{\bar g} =(\geq)0.\]
Anti-de Sitter Schwarzschild manifolds are static and Reissner-Nordstr\"om manifolds are sub-static. For Anti-de Sitter-Schwarzschild manifold,
\[\eta(s)=\sqrt{1-ms^{1-n}-\kappa s^2}, \quad \overline s\ge s \ge \underline s,\]
where $\kappa\in \mathbb R$, $m>0$, and $\overline s$ and $\underline s$ are the two positive solutions of the equation
\[1-ms^{1-n}-\kappa s^2=0,\]
(if $\kappa\le 0$, $\overline s=+\infty$).
This implies that the Lorentzian warped product
\[\tilde g=-\eta^2dt\otimes dt + \bar g\]
is a solution of Einstein's equations, it is the de Sitter-Schwarzschild space time metric. The Reissner-Nordstr\"om manifold is defined by
$N = \mathbb S^n \times (s_0,\infty)$ and
\[\eta(s)=\sqrt{1- m s^{1-n} + q^2s^{2(1-n)}}.\]
Here, $m > 2q > 0$ are constants, and $s_0$ is defined as the larger of the two
roots of the equation \[1- m s^{1-n} + q^2s^{2(1-n)} = 0.\]

\medskip

Here we want to illustrate that {\it one may not impose condition (\ref{e-c}) indiscriminately in the event of horizon}.
Suppose $(N, \bar g)$ as in (\ref{warp20}), let's consider a fixed slice of $(N,\bar{g})$, $(r,\theta)$. Denote $e_i, i=1,2$ the unit tangent vectors of the slice and $\nu$ the outer normal. The curvature in the ambient space can be computed as
\begin{align*}
\bar{R}_{ijij}=\frac{1-{\phi^\prime}^2}{\phi^2},\quad \bar{R}_{i\nu i\nu}=-\frac{\phi^{\prime\prime}}{\phi}, \quad
\bar{R}=2\frac{1-{\phi^\prime}^2-2\phi\phi^{\prime\prime}}{\phi^2},
\end{align*}
and
\begin{align*}
\bar{Ric}(e_i,e_i)=\frac{1-{\phi^\prime}^2-\phi\phi^{\prime\prime}}{\phi^2},\quad \bar{Ric}(\nu,\nu)=-2\frac{\phi^{\prime\prime}}{\phi}.
\end{align*}
The right hand side of (\ref{e-c}) becomes
\begin{align*}
2\frac{1-{\phi^\prime}^2-2\phi\phi^{\prime\prime}}{\phi^2}-2\min\{2\frac{1-{\phi^\prime}^2}{\phi^2}, -2\frac{\phi^{\prime\prime}}{\phi}\}.
\end{align*}
The ellipticity condition (\ref{e-c}) implies
\begin{align}\label{curv-assump}
R(x)\geq 2\frac{1-{\phi^\prime}^2}{\phi^2}(X(x))\quad and \quad
R(x)\geq -2\frac{1-{\phi^\prime}^2+2\phi\phi^{\prime\prime}}{\phi^2}(X(x))
\end{align}

\medskip

Suppose $(\mathbb{S}^2,g)$ is embedded and it bounds the horizon. Consider the minimum point of the radial function on $(\mathbb{S}^2, g)$, say $x_0$, thus at $x_0$,
$
h^i_j\leq \frac{\phi^\prime}{\phi}\delta^i_j$.
By Gauss equation,
\begin{equation}\label{Ga1}
R(x_0)\leq \frac{2}{\phi^2(r)}, \end{equation}
where $r$ is the radial function at $x_0$.
If (\ref{e-c}) is satisfied, then
\begin{align}\label{curv-assump-2}
R(x)\geq 2\frac{1-{\phi^\prime}^2}{\phi^2}(X(x)), \forall x\in \mathbb S^2.
\end{align}
Since the position function $X$ is not under control in general, that would require
\begin{align}\label{curv-assump-3}
R(x)\geq \frac{2}{\phi^2(r_0)}
\end{align}
where $r_0$ is the radius of the horizon.
Combine (\ref{Ga1}) and  (\ref{curv-assump-3}),
\begin{align*}
\frac{2}{\phi^2(r_0)} \leq R(x_0)\leq \frac{2}{\phi^2(r)}.
\end{align*}
This would yield $\phi(r_0)\geq \phi(r)$.
It contradicts to the fact that $\phi$ is an increasing function. That is, if $(\mathbb S^2, g)\subset (N^3,\bar g)$ contains horizon, ellipticity condition  (\ref{e-c}) can not be valid. This is a main challenge for isometric embedding of surfaces containing horizon. One would like to ask the following question.

\medskip

\noindent
{\it Question: Does there exist a homotopy path with initial embedding containing the horizon such that ellipticity of system (\ref{sys1}) is valid along the path?}

\medskip

On the other hand, one may still isometrically embed $(\mathbb S^2,g)$ to $(N^3,\bar g)$ without containing the horizon. The following is a special example.

\begin{theorem}
Suppose $(N,\bar{g})$ is warped product space with horizon. Suppose $\bar g\in C^4$, suppose there is a convex geodesic ball $B_r\subset N^3$ ($r>0$) and
\begin{eqnarray}\label{curv-assump-newU}
\bar{R}(X)-2\inf\{\bar{Ric}_{X}(\mu,\mu):\mu \in ST_{X}B_r\}\leq C_0, \quad \forall X\in B_r.
\end{eqnarray}
Then for each $\delta>0$, there is $C_{\delta}>0$, such that every $C^4$ metric $g$ on $\mathbb{S}^2$ with scalar curvature $R\ge C_0+\delta$ and $diameter (\mathbb S^2,g)\le C_{\delta}<r$, there is an $C^{3,\alpha}$ isometric embedding $(\mathbb{S}^2,g)$ into $(N,\bar{g})$, $\forall 0<\alpha<1$. In particular, each $C^4$ metric $g$ on $\mathbb{S}^2$ with scalar curvature $R> -2$ can be $C^{3,\alpha}$ isometrically embedded to the Anti-de Sitter-Schwarzschild manifold; each $C^4$ metric $g$ on $\mathbb{S}^2$ with scalar curvature $R> 0$ can be $C^{3,\alpha}$ isometrically embedded to the Reissner-Nordstr\"om manifold.
\end{theorem}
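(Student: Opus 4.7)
The plan is to run the method of continuity inside the convex geodesic ball $B_r$, using a path that keeps each intermediate metric admissible, and then apply the a priori estimates of Theorem \ref{thm2} together with the openness result of \cite{GLW}. Fix a point $p\in B_r$ far from $\partial B_r$, and for $R_0>C_0+\delta$ let $g_0=\lambda^2 g_{round}$ denote a round metric on $\mathbb S^2$ of constant scalar curvature $R_0$, with $\lambda$ so small that a geodesic sphere of intrinsic diameter $\pi\lambda$ around $p$ realizes an isometric embedding of $(\mathbb S^2,g_0)$ in $B_r$. Connect $g_0$ to the target metric $g$ by a smooth path $g_t$ of metrics on $\mathbb S^2$ (for instance, the normalized Ricci flow as in \cite{WL}, initiated at $g$ and terminating at a round metric, run in reverse), arranged so that $R(g_t)\ge C_0+\delta$ for all $t\in[0,1]$ and $\mathrm{diam}(\mathbb S^2,g_t)\le C_\delta <r$ throughout.

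The first step is to verify ellipticity of the embedding system along the entire path. For any surface $X(\mathbb S^2)\subset B_r$ the hypothesis (\ref{curv-assump-newU}) gives
\begin{equation*}
\bar R(X)-2\,\bar{Ric}_X(\nu,\nu)\;\le\;\bar R(X)-2\inf_{\mu\in ST_XB_r}\bar{Ric}_X(\mu,\mu)\;\le\;C_0,
\end{equation*}
so $R(g_t)\ge C_0+\delta$ implies the ellipticity condition (\ref{e-c}) uniformly, with a quantitative lower bound $\delta>0$ on $\sigma_2(h)$ via (\ref{f-scalar}). This in particular forces every candidate solution to have positive extrinsic scalar curvature. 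The initial embedding of $(\mathbb S^2,g_0)$ into $B_r$ sits strictly inside the ball away from the horizon, so the set of $t\in[0,1]$ for which (\ref{sys1}) admits an isometric embedding $X_t\colon(\mathbb S^2,g_t)\to B_r\subset(N,\bar g)$ is nonempty; openness follows from Li--Wang \cite{GLW} since the linearized system (\ref{linearized equation}) is elliptic whenever (\ref{e-c}) holds.

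For closedness, I apply Theorem \ref{thm2}(2): the geodesic ball $B_r$ has a uniform upper sectional-curvature bound $k$ and $r\le\min\{inj(p),\pi/(2\sqrt k)\}$ (shrinking $r$ if necessary, which only forces a smaller $C_\delta$), so every embedded solution $X_t$ with image in $B_r$ satisfies the principal curvature bound (\ref{estH}). Hence the fully nonlinear equation (\ref{f-scalar}) is uniformly elliptic and non-degenerate along the path, so the Nirenberg $C^{2,\alpha}$ estimate \cite{N1} followed by Schauder bootstrap yields uniform $C^{3,\alpha}$ bounds. The remaining ingredient is a $C^0$ bound keeping $X_t(\mathbb S^2)$ inside $B_r$: using the $L^2$ bound modulo the six-dimensional kernel $\mathcal N_t$ of \cite{GLW} and the intrinsic diameter control $\mathrm{diam}(\mathbb S^2,g_t)\le C_\delta<r$, one fixes the translational freedom at each $t$ by anchoring $X_t$ to $X_0$ via $X_t(x)=X_0(x)+\int_0^t\tau_s(x)\,ds$, so the image is confined to a tubular neighborhood of $X_0(\mathbb S^2)$ of radius controlled by the integral of $\|\tau_s\|_{L^\infty}$; combining this with the diameter bound and the mean curvature control of \cite{Lu} gives the position bound that keeps $X_t(\mathbb S^2)$ uniformly inside $B_r$ and bounded away from the horizon.

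The main obstacle is precisely this last point: without the assumption $C_\delta<r$ and the convexity of $B_r$, there is no mechanism to prevent the surface from drifting toward $\partial B_r$ or the horizon along the path, and either occurrence would destroy both the curvature bound of Theorem \ref{thm2} and the ellipticity (\ref{e-c}). With the hypothesis in force the continuity method closes, producing an isometric embedding of $(\mathbb S^2,g)$ into $B_r\subset(N,\bar g)$ of class $C^{3,\alpha}$. The two named special cases follow by computing $C_0$: for the Anti-de Sitter-Schwarzschild metric one reads off $\bar R-2\,\bar{Ric}(\mu,\mu)\to -2$ as the relevant infimum on any bounded piece of $B_r$, and for the Reissner--Nordstr\"om metric the corresponding quantity is nonpositive, so the scalar curvature hypotheses $R>-2$ and $R>0$ respectively give the required $\delta>0$ on any compact piece, and one chooses $r$ to accommodate the diameter of $(\mathbb S^2,g)$.
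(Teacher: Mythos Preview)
Your overall architecture matches the paper's: continuity method seeded by a small geodesic sphere in $B_r$, ellipticity forced by the curvature hypothesis (\ref{curv-assump-newU}) together with $R\ge C_0+\delta$, openness from \cite{GLW}, closedness from the curvature estimate (Theorem \ref{thm2}(2)/Corollary \ref{cor-3}) combined with the position-independent mean curvature bound of \cite{Lu}, and the special cases handled via the asymptotic structure of the ambient spaces.

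The one genuine gap is your homotopy path. The normalized Ricci flow initiated at $g$ converges to the round metric of the \emph{same area} as $g$, i.e.\ of scalar curvature $8\pi/\mathrm{area}(g)$; it does not terminate at the particular small round metric $g_0$ you fixed in advance. So ``Ricci flow run in reverse'' by itself does not connect $g_0$ to $g$, and your continuity argument has no path joining the seed embedding to the target. The paper closes this gap with an explicit three-part construction: (i) normalized Ricci flow on $g$ to a round metric $g_1$; (ii) normalized Ricci flow on the induced metric of the small geodesic sphere $B_\epsilon$ to a round metric $g_2$; (iii) a scaling path between the coordinate spheres $g_1$ and $g_2$ inside a space form (Euclidean if $C_0\ge 0$, hyperbolic of curvature $C_0/2$ otherwise), along which the scalar curvature is nondecreasing. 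One then checks $R\ge C_0+\delta$ along each leg separately---trivially on (iii), and on (i)--(ii) by the monotonicity of $\min R$ under Ricci flow---so ellipticity persists throughout. Adding this connecting piece is the only substantive repair your argument needs.
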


\begin{proof}
The closedness follows from Corollary \ref{cor-3} and discussion above, the openness has been proved in \cite{LW}.  
The only thing left is to find a homotopy path satisfying conditions in theorem. Suppose $p_0$ is the center of the geodesic convex ball $B_r$, the sphere with radius $\epsilon$ centered at $p_0$ has scalar curvature very large if $\epsilon$ is small. Denote this sphere as $B_\epsilon$, we may use $B_\epsilon$ as the initial isometric embedding.

The homotopic path will consist three parts. The first part is the normalized Ricci flow, at the end of this path, we have a metric of constant scalar curvature, i.e. $(\mathbb{S}^2, g_1)$. Clearly, $R_{g_1}\geq C_0+\delta$. If $C_0\geq 0$, then we embed $(\mathbb{S}^2,g_1)$ into Euclidean space as a coordinate sphere. Otherwise, we embed $(\mathbb{S}^2,g_1)$ into hyperbolic space with constant curvature $\frac{C_0}{2}$ as a coordinate sphere. The second part is the normalized Ricci flow for $B_\epsilon$, at the end of this path, we have a metric of constant scalar curvature, i.e. $(\mathbb{S}^2, g_2)$. Clearly the scalar curvature of $(\mathbb{S}^2,g_2)$ is also sufficiently large, thus can be isometrically embedded into Euclidean space or hyperbolic space as a coordinate sphere. The third part is to shrink $(\mathbb{S}^2,g_1)$ in Euclidean space or hyperbolic space to $(\mathbb{S}^2,g_2)$. This can always be done as both are coordinate sphere in ambient space. The path is now finished. By our construction, it's clear that $R\ge C_0+\delta$ along the path. Thus by the same argument, we can establish isometric embedding into $B_r$.

Note that the Anti-de Sitter-Schwarzschild manifold is asymptotic hyperbolic and Reissner-Nordstr\"om manifold is asymptotic flat, for any $r>0$, we may find $B_r$ such that condition (\ref{curv-assump-newU}) is satisfied.
\end{proof}

\medskip

\noindent
{\it Acknowledgement}: The first author would like to thank Mu-Tao Wang for enlightening conversations regarding the isometric embedding problems and the quasi-local masses in general relativity. We would like to thank the anonymous referee for the help in the exposition of the paper.

\end{document}